\newtheorem{theorem}{Theorem}%[section]
\newtheorem{lemma}[theorem]{Lemma}
\newtheorem{cor}[theorem]{Corollary}
\newtheorem{prop}[theorem]{Proposition}
\newtheorem{defn}[theorem]{Definition}
\begin{document}%%%%%%%%%%%%%%%%%%%%%%%%%%%%%%%%%%%%%%%%%%%%%%%%%%%%%%%%
%%%%%%%%%%%%%%%%%%%%%%%%%%%%%%%%%%%%%%%%%%%%%%%%%%%%%%%%%%%%%%%%%%%%%%%%

\title[Bressoud Style Identities]{Bressoud Style Identities for Regular Partitions \\ and Overpartitions}

\author[Kur\c{s}ung\"{o}z]{Ka\u{g}an Kur\c{s}ung\"{o}z}
\address{Faculty of Engineering and Natural Sciences, Sabanc{\i} University, \.{I}stanbul, Turkey}
\email{kursungoz@sabanciuniv.edu}

%    For articles to be published after 1 January 2010, you may use
%    the following version:
\subjclass[2010]{Primary 11P84, 05A17, 05A15 Secondary 05A19}

\keywords{Partition Identity, Rogers-Ramanujan Generalization, Partition Generating Function}

\date{September 2014}

\begin{abstract}
\noindent
We construct a family of partition identities which contain the following identities: 
Rogers-Ramanujan-Gordon identities, 
Bressoud's even moduli generalization of them, 
and their counterparts for overpartitions due to Lovejoy et al. and Chen et al.  
We obtain unusual companion identities to known theorems as well as to the new ones 
in the process.  
The proof is, against tradition, constructive and open to automation.  
\end{abstract}

\maketitle

%%%%%%%%%%%%%%%%%%%%%%%%%%%%%%%%%%%%%%%%%%%%%%%%%%%%%%%%%%%
%%%%                                                   %%%%
%%%%                   Introduction                    %%%%
%%%%                                                   %%%%
%%%%%%%%%%%%%%%%%%%%%%%%%%%%%%%%%%%%%%%%%%%%%%%%%%%%%%%%%%%

\section{Introduction}
\label{secIntro}

Euler defined the integer partitions, 
noticed their intimate connection with $q-$series, 
and gave the first known identities in the field \cite[Ch. 16]{Euler}.  
Afterwards, arguably the longest stride was made by Rogers, Ramanujan, and Schur
by independently discovering the Rogers-Ramanujan identities \cite{Rogers, Hardy, Schur}.  
The first of the said identities is given below.  

\begin{theorem}[the first Rogers-Ramanujan identity]
\label{thRR1}
  Given a non-negative integer $n$, 
  the number of partitions of $n$ into distinct non-consecutive parts
  equals the number of partitions of $n$ into parts that are $\equiv \pm 1 \pmod{5}$.   
\end{theorem}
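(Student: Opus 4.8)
The plan is to translate both sides into generating functions and reduce the claim to the analytic first Rogers--Ramanujan identity
\[
\sum_{k\ge 0}\frac{q^{k^2}}{(q;q)_k}=\frac{1}{(q;q^5)_\infty\,(q^4;q^5)_\infty},
\]
where $(a;q)_k=\prod_{i=0}^{k-1}(1-aq^i)$. For the left side I would group the partitions counted there by their number of parts $k$: the componentwise-smallest partition into $k$ distinct non-consecutive parts is the staircase $1+3+5+\cdots+(2k-1)=k^2$, and every such partition is obtained from it by adding, monotonically, a partition with at most $k$ parts. Hence the $k$-part partitions contribute $q^{k^2}/(q;q)_k$, and summing over $k$ gives the series side. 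For the right side, Euler's product shows that the partitions into parts $\equiv\pm1\pmod 5$ are generated by $1/\big((q;q^5)_\infty(q^4;q^5)_\infty\big)$. So Theorem~\ref{thRR1} is equivalent to the displayed $q$-series identity.

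To prove that identity I would use Schur's finite version. Define $D_0=D_1=1$ and $D_n=D_{n-1}+q^{\,n-1}D_{n-2}$ for $n\ge 2$. Iterating the recursion and passing to the limit shows $D_n\to\sum_{k\ge 0}q^{k^2}/(q;q)_k$; this is transparent from the equivalent form $D_n=\sum_{j\ge 0}q^{j^2}\binom{n-j}{j}_q$, since $\binom{n-j}{j}_q\to 1/(q;q)_j$ as $n\to\infty$. Independently, I would prove by induction on $n$ that
\[
D_n=\sum_{j=-\infty}^{\infty}(-1)^j\,q^{\,j(5j-1)/2}\binom{n}{\lfloor (n-5j)/2\rfloor}_q ,
\]
the point being that the right-hand side satisfies the same recursion and the same initial values; this is the bookkeeping-heavy step, where the Pascal-type recurrences for the Gaussian binomial coefficients must be reconciled with the floor functions and the sign pattern.

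Finally, I would let $n\to\infty$ in the closed form: each Gaussian binomial coefficient tends to $1/(q;q)_\infty$, so the right side becomes $\frac{1}{(q;q)_\infty}\sum_{j}(-1)^j q^{\,j(5j-1)/2}$, and Jacobi's triple product identity collapses the theta series to $(q^2;q^5)_\infty(q^3;q^5)_\infty(q^5;q^5)_\infty$. Dividing by $(q;q)_\infty=(q;q^5)_\infty(q^2;q^5)_\infty(q^3;q^5)_\infty(q^4;q^5)_\infty(q^5;q^5)_\infty$ leaves exactly $1/\big((q;q^5)_\infty(q^4;q^5)_\infty\big)$, as required. The main obstacle is the inductive verification of the closed form for $D_n$: guessing the exponents $j(5j-1)/2$ and the truncation $\lfloor(n-5j)/2\rfloor$ and then checking the recursion is delicate, whereas the generating-function translation and the triple-product step are routine or citable. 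I would also remark that this step is precisely the kind of identity a $q$-analogue of Zeilberger's algorithm certifies automatically, which is in the spirit of the ``open to automation'' remark above; unwinding the whole argument into an explicit bijection between the two families of partitions is possible in principle (via the involution principle) but far more laborious, and is not the route I would take.
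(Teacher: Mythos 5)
Your reduction to the analytic identity is sound: the staircase argument giving $q^{k^2}/(q;q)_k$ for the $k$-part partitions, Euler's product for the $\equiv\pm1\pmod 5$ side, the passage to the limit, and the final triple-product step (base $q^5$, $z=q^3$, yielding $(q^2,q^3,q^5;q^5)_\infty$ and then the desired product after dividing by $(q;q)_\infty$) are all correct, as is the recursion $D_n=D_{n-1}+q^{n-1}D_{n-2}$ for $D_n=\sum_{j\ge0}q^{j^2}\binom{n-j}{j}_q$. The step that fails as written is precisely the one you flagged as delicate: the closed form
\[
D_n=\sum_{j=-\infty}^{\infty}(-1)^j\,q^{\,j(5j-1)/2}\binom{n}{\lfloor (n-5j)/2\rfloor}_q
\]
is not an identity. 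At $n=4$ the recursion gives $D_4=1+q+q^2+q^3+q^4$, whereas your right-hand side is $\binom{4}{2}_q-q^{3}\binom{4}{4}_q=1+q+2q^2+q^4$. The trouble is that the exponent convention and the truncation convention correspond to opposite sign choices of $j$: replacing $j$ by $-j$ turns $j(5j-1)/2$ into $j(5j+1)/2$ but turns $\lfloor(n-5j)/2\rfloor$ into $\lfloor(n+5j)/2\rfloor=n-\lceil(n-5j)/2\rceil$, and floor versus ceiling matters whenever $n-5j$ is odd. So the induction you propose would break down; the correct statement (Schur's polynomial identity, cf.\ \cite{Andrews-bluebook}) keeps $\lfloor(n-5j)/2\rfloor$ but uses the exponent $j(5j+1)/2$ (equivalently, your exponent with $\lceil(n-5j)/2\rceil$). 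With that correction the induction and the limit go through, and since the limiting theta series is invariant under $j\mapsto-j$, the rest of your argument is unaffected.

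For context: the paper does not prove Theorem \ref{thRR1} at all — it is quoted as classical — but it is recovered as the $d=1$, $k=a=2$, $s=0$ case of Theorem \ref{thMain}. The paper's machinery finitizes differently from yours: instead of truncating by degree, it carries a second variable $x$ marking the number of parts, shows the two-variable generating function is uniquely determined by the $q$-difference equations of Corollary \ref{corFuncEq}, exhibits an explicit series satisfying the same equations (Lemma \ref{lemmaSeries}), and only then sets $x=1$ and applies Jacobi's triple product (Proposition \ref{propJTP}). Your Schur-polynomial induction plays exactly the role of the verification in Lemma \ref{lemmaSeries}, and your remark about $q$-Zeilberger certification is in the same spirit as the paper's ``open to automation'' claim; the trade-off is that the paper's $x$-finitization generalizes to the whole Bressoud-style family, while the degree truncation is tailored to the single identity.
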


Then, one of the significant generalizations was given by Gordon \cite{RRG}.  

\begin{theorem}[Rogers-Ramanujan-Gordon identities]
\label{thRRG}
  Let $k$ and $a$ be integers such that $k \geq 2$ and $1 \leq a \leq k$.  
  Given a non-negative integer $n$, 
  the number of partitions of $n$ in which 
  the combined number of occurrences of any two consecutive parts is at most $k-1$
  and 1 can appear at most $a-1$ times 
  equals the number of partitions of $n$ into parts 
  that are $\not \equiv 0, \pm a \pmod{2k+1}$.  
\end{theorem}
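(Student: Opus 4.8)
The plan is to prove a version that also records the number of parts and then set that variable to $1$. For $1\le a\le k$ put
\[
\mathfrak{C}_{k,a}(x;q)\;=\;\sum_{\lambda}x^{\ell(\lambda)}q^{|\lambda|},
\]
the sum ranging over all partitions $\lambda$ in which the combined number of occurrences of any two consecutive parts is at most $k-1$ and $1$ occurs at most $a-1$ times; here $\ell(\lambda)$ is the number of parts and $|\lambda|$ the sum. It suffices to show $\mathfrak{C}_{k,a}(1;q)=\prod_{\substack{n\ge 1\\ n\not\equiv 0,\pm a\,(2k+1)}}(1-q^n)^{-1}$, since comparing coefficients of $q^n$ then gives the theorem.

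\emph{Step 1 (the partition side as a solution of a $q$-difference system).} Take $\lambda$ counted by $\mathfrak{C}_{k,a}$ in which $1$ occurs \emph{exactly} $a-1$ times, delete those $a-1$ ones, and subtract $1$ from every remaining part, obtaining $\mu$. Then $x^{\ell(\lambda)}q^{|\lambda|}=x^{a-1}q^{a-1}\,(xq)^{\ell(\mu)}q^{|\mu|}$, and --- since the multiplicity of $2$ in $\lambda$ was at most $(k-1)-(a-1)=k-a$ --- the partition $\mu$ is an arbitrary partition counted by $\mathfrak{C}_{k,k-a+1}$. The map is reversible, so
\begin{equation}\label{eqRRGrec}
\mathfrak{C}_{k,a}(x;q)-\mathfrak{C}_{k,a-1}(x;q)=x^{a-1}q^{a-1}\,\mathfrak{C}_{k,k-a+1}(xq;q),\qquad 2\le a\le k,
\end{equation}
and running the same argument at $a=1$ (nothing to delete) gives $\mathfrak{C}_{k,1}(x;q)=\mathfrak{C}_{k,k}(xq;q)$. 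With the normalization $\mathfrak{C}_{k,a}(0;q)=1$ (the empty partition), this system has a unique solution in $\mathbb{Z}[[q]][[x]]$: the coefficient of $x^N$ in each $\mathfrak{C}_{k,a}$ is determined by coefficients of lower powers of $x$, the only wrinkle being that isolating $[x^N]\mathfrak{C}_{k,1}$ requires dividing by $1-q^N$, which is harmless over $\mathbb{Z}[[q]]$.

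\emph{Step 2 (the product side as a solution of the same system).} Write $P_{k,a}(q)$ for the infinite product. The Jacobi triple product identity with base $q^{2k+1}$ gives $(q;q)_\infty P_{k,a}(q)=\sum_{j\in\mathbb{Z}}(-1)^j q^{(2k+1)\binom{j}{2}+aj}$. I would then take the explicit bilateral $x$-series refinement $\mathfrak{D}_{k,a}(x;q)$ of $P_{k,a}$ due to Andrews, normalized so that $\mathfrak{D}_{k,a}(1;q)=P_{k,a}(q)$ and $\mathfrak{D}_{k,a}(0;q)=1$, and verify that it satisfies \eqref{eqRRGrec} and the $a=1$ boundary relation. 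This reduces to splitting the bilateral sum in $\mathfrak{D}_{k,a}-\mathfrak{D}_{k,a-1}$, reindexing $j$, and recognizing $x^{a-1}q^{a-1}\mathfrak{D}_{k,k-a+1}(xq;q)$ --- a finite $q$-series computation. By the uniqueness of Step 1, $\mathfrak{C}_{k,a}=\mathfrak{D}_{k,a}$, and $x=1$ completes the proof.

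\emph{Where the difficulty lies.} Step 1 is easy once one sees that stripping the ones and shrinking converts ``$1$ occurs $\le a-1$ times'' into ``$2$ occurs $\le k-a$ times'', i.e.\ the new threshold $k-a+1$; the only care needed is the $a=1$ edge and the empty partition. The substantive work is the $q$-series verification in Step 2 --- keeping the exponents and the involution $a\leftrightarrow k-a+1$ (under $x\mapsto xq$) straight --- which is mechanical but error-prone, and is precisely the bookkeeping the paper aims to automate. A variant avoiding it: iterate \eqref{eqRRGrec} to express $\mathfrak{C}_{k,a}(1;q)$ as an Andrews--Gordon multiple sum and apply the Bailey chain to reach $P_{k,a}(q)$ directly; but then one incurs the (equally mechanical) cost of matching the iterate with that multiple sum.
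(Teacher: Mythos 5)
Your proposal is correct and follows essentially the same route the paper takes for this theorem (the $d=1$, $s=0$ case of its machinery, which is also Andrews' classical analytic proof): the strip-the-ones recurrence of your Step 1 is exactly the recurrence of Lemma \ref{lemmaRecurrences}/Corollary \ref{corFuncEq}, your uniqueness argument plays the role of the paper's uniqueness claim, and your Step 2 (an explicit Andrews-type series refinement satisfying the same $q$-difference system, evaluated at $x=1$ via Jacobi's triple product) is what Lemma \ref{lemmaSeries} and Proposition \ref{propJTP} carry out explicitly.
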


The Rogers-Ramanujan-Gordon identities spawned fruitful research in the area.  
One of the follow-up questions was whether or not there was an even moduli analogue.  
This question was first addressed and partially solved by Andrews \cite{Andrews-RRG-evenmod-partial}, 
and finally settled by Bressoud \cite{Br-RRG-AllMod}.  
The extension case of his theorem was as follows, 
where $f_i$ denotes the number of occurrences of $i$ in a given partition.  

\begin{theorem}
\label{thBressoud}
  Given integers $k$, $a$ such that $k \geq 2$ and $1 \leq a < k$, 
  let $C_{k, a}(n)$ denote the number of partitions of $n$ such that 
  $f_1 < a$, $f_i + f_{i+1} < k$ for all $i$, 
  and if $f_i + f_{i+1} = k-1$, then $if_i + (i+1)f_{i+1} \equiv a-1 \pmod{2}$; 
  and $D_{k, a}(n)$ denote the number of partitions of $n$ into parts 
  $\not\equiv 0, \pm a \pmod{2k}$.  
  Then $C_{k, a}(n) = D_{k, a}(n)$.  
\end{theorem}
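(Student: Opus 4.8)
The plan is to prove the equivalent analytic identity
\[
\sum_{n \geq 0} C_{k,a}(n)\, q^n \;=\; \prod_{\substack{m \geq 1 \\ m \not\equiv 0,\, \pm a \pmod{2k}}} \frac{1}{1-q^m},
\]
whose right-hand side is by definition $\sum_{n\geq 0} D_{k,a}(n)\, q^n$ and which Jacobi's triple product rewrites as a theta quotient over base $q^{2k}$. The whole burden is to produce a tractable closed form for the left-hand side --- an Andrews--Gordon--Bressoud style multiple $q$-series --- directly from the combinatorial description of $C_{k,a}$, much as one would give a constructive proof of the Rogers--Ramanujan--Gordon identities (Theorem~\ref{thRRG}).

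First I would pass to frequency sequences $(f_1, f_2, \dots)$ and read off the local rules: $f_1 \leq a-1$, $f_i + f_{i+1} \leq k-1$ for every $i$, and, whenever $f_i + f_{i+1} = k-1$, the parity constraint $i f_i + (i+1) f_{i+1} \equiv a-1 \pmod 2$. I would then decompose an arbitrary admissible partition into a \emph{minimal} admissible partition of a prescribed shape together with a sequence of independent forward moves. Concretely, the coarse shape of an admissible frequency sequence is captured by a vector of nonnegative counters $n_1, \dots, n_{k-1}$ (roughly, $n_j$ records the sub-blocks whose local multiplicity sits at level $j$); the forward moves acting on a minimal configuration of a given shape may be applied freely and contribute a factor $1/(q;q)_{n_j}$ per counter, the minimal configurations contribute a numerator $q^{Q(n_1, \dots, n_{k-1})}$ with $Q$ a quadratic form whose constant and linear terms encode the offset coming from $a$, and the generating function emerges in the shape $\sum_{\mathbf n \geq 0} q^{Q(\mathbf n)} / \bigl( (q;q)_{n_1} \cdots (q;q)_{n_{k-1}} \bigr)$.

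The delicate point, and the step I expect to be the main obstacle, is the parity clause. A block of consecutive parts whose combined multiplicity hits the \emph{maximal} value $k-1$ has the parity of $i f_i + (i+1)f_{i+1}$ frozen, so such a block cannot be translated by a single unit but only in steps of two; this rigidity is exactly what turns the odd modulus $2k+1$ of Theorem~\ref{thRRG} into the even modulus $2k$ here. Pushing it through forces the ``tight'' blocks to be treated as a separate species in the decomposition and modifies the quadratic form $Q$ --- an extra factor of $2$ along one direction, together with linear corrections. I would pin down the precise $Q$ by working out $k = 2$ and $k = 3$ by hand and then checking that the general bookkeeping is consistent.

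It then remains to identify the multiple series with the product. This is the classical Andrews--Gordon--Bressoud evaluation, which can be proved by iterating Bailey's lemma along a Bailey chain seeded by the $q$-binomial theorem or, in keeping with the automatable spirit of the argument, by exhibiting the finite ``Bressoud polynomials'' that truncate both sides, verifying that they obey a common recurrence and the same initial conditions, and letting the truncation degree tend to infinity. Since this last evaluation is already in the literature, the genuinely new content is the combinatorial construction of the series and, above all, the threading of the parity constraint through it.
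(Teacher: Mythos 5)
Your route is genuinely different from the paper's. The paper never touches multiple series: it obtains Bressoud's theorem as the $d=2$, $s=0$ case of Theorem~\ref{thMain} by (i) pairing the class counted by $C_{k,a}$ with its parity companion (the partitions satisfying the opposite congruence when $f_i+f_{i+1}=k-1$), the two classes being tracked by the superscript $s$ in ${}_2b^s_{k,a}(m,n)$; (ii) deriving the coupled recurrence of Lemma~\ref{lemmaRecurrences} by stripping the $1$'s, which sends $a$ to $k-a+1-s$ and shifts $s$; (iii) exhibiting explicit one-fold series (Lemma~\ref{lemmaSeries}) that satisfy the same functional equations and initial conditions, so uniqueness gives ${}_2F^s_{k,a}={}_2G^s_{k,a}$ (Corollary~\ref{corSeriesGenFunc}); and (iv) evaluating at $x=1$ by Jacobi's triple product (Proposition~\ref{propJTP}). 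Your plan is instead the classical Andrews--Gordon--Bressoud path: decompose admissible frequency sequences into minimal configurations plus forward moves, with the tight blocks at level $k-1$ movable only in steps of two, arrive at a multiple series of the shape $\sum_{\mathbf n} q^{Q(\mathbf n)}/\bigl((q;q)_{n_1}\cdots(q;q)_{n_{k-2}}(q^2;q^2)_{n_{k-1}}\bigr)$, and then quote its known product evaluation. That is essentially Bressoud's own proof (\cite{Br-RRG-AllMod} together with \cite{Br-Andrews-Gordon}) in the Gordon-marking language of \cite{K-gordonmarking}. What your route buys is the Andrews--Gordon type multiple-series companion, which the paper deliberately leaves as an open problem for general $d$; what the paper's route buys is uniformity in $d$ and $s$ and the unusual companion identities, which the multiple-series decomposition does not obviously provide.

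That said, as written this is an outline, and the step you yourself flag as the main obstacle --- threading the parity clause through the decomposition --- is precisely the step not carried out. It is not enough to assert that tight blocks translate by two and that $Q$ picks up ``an extra factor of $2$ along one direction'': you must show that the minimal position of each tight block has its parity forced in the way dictated by $a$, that moves of tight and non-tight blocks can be disentangled (they interact when blocks of different levels pass through one another), and that the bookkeeping yields exactly the factor $(q^2;q^2)_{n_{k-1}}$ together with the correct linear terms in $Q$; proposing to pin $Q$ down by computing $k=2,3$ is a heuristic for guessing the answer, not a proof of the decomposition. The final identification of the multiple series with $(q^{a},q^{2k-a},q^{2k};q^{2k})_\infty/(q;q)_\infty$ is outsourced to the literature, which is legitimate with a citation to Bressoud's analytic identity, but it means the genuinely new content of your argument is exactly the combinatorial step that remains unproved. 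So the approach is viable and known to succeed, but in its present form it is a program rather than a proof.
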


Recently, Corteel and Lovejoy defined overpartitions \cite{Ctl-Lvj-overptn}.  
An overpartition is a regular partition in which the first occurrence of each part may be overlined.  
Then, a wave of results was the overpartition analogues of (regular) partition identities.  

Lovejoy found two cases in Rogers-Ramanujan-Gordon identities for overpartitions 
\cite{Lvj-Gordon-overptn}, and then the theorem for all cases was given by 
Chen, Sang and Shi \cite{Chen-etal-RRG-over}.  
Then, Corteel, Lovejoy and Mallet extended Bressoud's theorem to overpartitions 
in one case \cite{Ctl-etal-overptn-RR-evenmod}, and 
Chen, Sang and Shi showed the identities in all cases \cite{Chen-etal-overptn-Bressoud}.  

In this paper, we prove a theorem (Theorem \ref{thMain}) 
which contains all of the above theorems inside an infinite family of identities.  
In particular, we derive a $\pmod{d}$ analogue of Bressoud's $\pmod{2}$ condition.  
We show that the (regular) partition theorems naturally sit inside the overpartition theorems.  
That is, the identities are not just stated in a single theorem, but they are proved simultaneously.  

In all of the cited works, 
the approach is to work with multiplicity conditions on partitions on the one hand, 
and to work with variants of known series on the other hand, 
and eventually arguing that the functional equations and the initial conditions match.  
Two separate lines of computations are reconciled at the end.  
Here in contrast, we construct series from scratch using those functional equations 
that are implied by the recurrences that come from the definition of classes of partitions.  
In other words, the construction is linear.  
This is a modification of the method given in \cite{K-Andrews-Stayla}.  

The construction gives unusual companion identities as well.  
Unusual in the sense that while the known results assert equality between two partition counters, 
the companion identities give equalities between \emph{combinations} of partition counters.  

In Section \ref{secPrelim}, we collect all definitions for the sake of completeness, 
give a few examples and some straightforward facts involving infinite series and products.  
In Section \ref{secMain}, the main results are stated and proven.  
Finally, in Section \ref{secFuture}, the construction is explained 
in detail with its extent and shortcomings.  
We conclude with some directions for future research and open questions.  

\section{Preliminaries}
\label{secPrelim}

\begin{defn}
\label{defPtn}
  A partition of a non-negative integer $n$ 
  is a non-increasing sum of positive integers 
  \begin{equation*}
%   \label{eqPtnStd}
    n = \lambda_1 + \lambda_2 + \cdots + \lambda_m
  \end{equation*}
  where $\lambda_1 \geq \lambda_2 \geq \cdots \geq \lambda_m > 0$.  
  The number of parts $m$ is also known as the length of the partition.  
  
  Alternatively, one can write
  \begin{equation*}
%   \label{eqPtnFreq}
    n = 1 f_1 + 2 f_2 + 3 f_3 + \cdots
  \end{equation*}
  for the same partition, where $f_i$ denotes 
  the number of occurrences, or the frequency, of $i$
  among $\lambda_1$, $\lambda_2$, \ldots, $\lambda_m$.  
\end{defn}
Obviously, only finitely many of the $f_i$ can be nonzero.  
For example the non-increasing sum
\begin{equation}
\label{exPtn}
  5+5+3+3+2+1+1+1
\end{equation}
is a partition of 21, 
where
\begin{equation*}
% \label{exPtnFreqs}
  f_1 = 3, \quad f_2 = 1, \quad f_3 = 2, \quad f_4 = 0, \quad f_5 = 2, \quad 
  \textrm{ and } f_i = 0 \textrm{ for } i \geq 6.  
\end{equation*}

\begin{defn}
\label{defOverPtn}
  An overpartition of a non-negative integer $n$
  is a partition of $n$ in which the first occurrence of each part may be overlined.  
  One can write
  \begin{equation*}
%   \label{eqOverPtnFreq}
    n = 1 f_1 + 1 f_{\overline{1}} + 2 f_2 + 2 f_{\overline{2}} + 3 f_3 + 3 f_{\overline{3}} + \cdots
  \end{equation*}
  where $f_i$ denotes the number of occurrences, or the frequency, of $i$ (non-overlined), 
  and $f_{\overline{i}}$ denotes that of $\overline{i}$ (overlined).  
\end{defn}
Again, only finitely many of $f_i$ or $f_{\overline{i}}$s can be non-zero.  
In addition, $f_{\overline{i}}$'s may be 0 or 1 only.  
For example, 
\begin{equation}
\label{exOverPtn}
  8+8+7+7+5+5+\overline{4}+3+3+\overline{2}+\overline{1}+1
\end{equation}
is an overpartition of 54 where 
\begin{align*}
% \label{exOverPtnFreqs}
  & f_1 = 1, \;
  f_{\overline{1}} = 1, \;
  f_2 = 0, \;
  f_{\overline{2}} = 1, \;
  f_3 = 2, \;
  f_{\overline{3}} = 0, \;
  f_4 = 0, \;
  f_{\overline{4}} = 1, \;
  f_5 = 2, \; 
  f_{\overline{5}} = 0, \; \\
  & f_6 = 0, \;
  f_{\overline{6}} = 0, \;
  f_7 = 2, \;
  f_{\overline{7}} = 0, \;
  f_8 = 2, \;
  f_{\overline{8}} = 0, \;
  \textrm{ and } f_i = f_{\overline{i}} = 0 \textrm{ for } i \geq 9.  
\end{align*}

\begin{defn}
\label{defRhoStat}
  Given an overpartition and an arbitrary positive integer $i$ 
  that need not occur in the overpartition, 
  \begin{equation*}
%   \label{eqRhoStat}
    \rho(i) = \sum_{j = 1}^i (-1)^j f_{\overline{j}}.  
  \end{equation*}
  In other words, $\rho(i)$ is the signed sum of number of occurrences of 
  overlined parts that are less than or equal to $i$.  
\end{defn}
For instance, the overpartition \eqref{exOverPtn} has
\begin{equation*}
% \label{exRhoStat}
  \rho(1) = -1, \;
  \rho(2) = 0, \;
  \rho(3) = 0, \;
  \textrm{ and } \rho(i) = 1, \textrm{ for } i \geq 4.  
\end{equation*}

The unsigned sum $V(i) = \sum_{j = 1}^i f_{\overline{j}}$ was 
defined in \cite{Ctl-etal-overptn-RR-evenmod} 
and used in \cite{Chen-etal-overptn-Bressoud, Ctl-etal-overptn-RR-evenmod}.  
Again, as an example, the overpartition \eqref{exOverPtn} has 
\begin{equation*}
% \label{exVStat}
  V(1) = 1, \;
  V(2) = 2, \;
  V(3) = 2, \;
  \textrm{ and } V(i) = 3, \textrm{ for } i \geq 4.  
\end{equation*}

For ease of reference, we collected the partition counters 
used in this paper in the following definition.  

\begin{defn}
\label{defEnumerants}
  Let $n$, $m$, $k$, $a$, $d$, $s$ be non-negative integers such that 
  \[
    k \geq 2, \quad 
    1 \leq a \leq k, \quad 
    1 \leq d \leq k, \quad 
    0 \leq s \leq d-1.  
  \]
  
  \begin{tabular}{rcp{13cm}}
    ${}_dA_{k, a}(n)$ & = & 
      the number of partitions of $n$
      using parts $\not\equiv 0, \pm a \pmod{2k+2-d}$, 
      unless $2a = 2k + 2 - d$. 
  \end{tabular}
  
  \begin{tabular}{rcp{13cm}}
    ${}_d\overline{A}_{k, a}(n)$ & = & 
       number of overpartitions of $n$
       where non-overlined parts $\not\equiv 0, \pm a$ $\pmod{2k+1-d}$, 
	if $2a \neq 2k+1-d$, 
	
       number of overpartitions of $n$
       where all parts $\not\equiv 0 \pmod{k + (1-d)/2}$, 
	if $2a = 2k+1-d$.  
  \end{tabular}
  
  \begin{tabular}{rcp{13cm}}
    ${}_dB^s_{k, a}(n)$ & = & 
      number of partitions of $n$ such that 
      $\quad f_i + f_{i+1} < k$, 
      $\quad f_1 < a$, 
      
      and for $\delta = 1, 2, \ldots, d-1$, 
      if $f_i + f_{i+1} = k-\delta$, 
      
      then $a + s - 1 - f_{\textrm{odd}} \equiv 0, 1, \ldots, \delta-1 \pmod{d}$, 
      
      where $f_{\textrm{odd}} = f_i$ if $i$ is odd, 
      and $f_{\textrm{odd}} = f_{i+1}$ if $i$ is even.  
  \end{tabular}
  
  \begin{tabular}{rcp{13cm}}
    ${}_d\overline{B}^s_{k, a}(n)$ & = & 
      number of overpartitions of $n$ such that 
      $\quad f_i + f_{\overline{i}} + f_{i+1} < k$, 
      $\quad f_1 < a$, 
      
      and for $\delta = 1, 2, \ldots, d-1$, 
      if $f_i + f_{\overline{i}} + f_{i+1} = k-\delta$, 
      
      then $a + s - 1 - f_{\textrm{odd}}  - \rho(i) \equiv 0, 1, \ldots, \delta-1 \pmod{d}$, 
      
      where $f_{\textrm{odd}} = f_i + f_{\overline{i}}$ if $i$ is odd, 
      and $f_{\textrm{odd}} = f_{i+1}$ if $i$ is even.  
  \end{tabular}
  
  \begin{tabular}{rcp{13cm}}
    ${}_db^s_{k, a}(m, n)$ & = & 
      number of partitions of $n$ counted by ${}_dB^s_{k, a}(n)$ 
      which have exactly $m$ parts.  
  \end{tabular}
  
  \begin{tabular}{rcp{13cm}}
    ${}_d\overline{b}^s_{k, a}(m, n)$ & = & 
      number of overpartitions of $n$ counted by ${}_d\overline{B}^s_{k, a}(n)$ 
      which have exactly $m$ parts.  
  \end{tabular}
\end{defn}
It is straightforward to check that
the partition \eqref{exPtn} is counted by ${}_4b^1_{5, 3}(8, 21)$, 
and the overpartition \eqref{exOverPtn} is counted by ${}_4\overline{b}^1_{5, 3}(12, 54)$.  

We follow the $q$-series notation in \cite{GR}.  
Below, $n$ is a non-negative integer, $z$, $z_1$, $z_2$, \ldots $z_r$ are arbitrary indeterminates, 
and $q$ is a complex number such that $\vert q \vert < 1$.   
The condition on $q$ ensures the absolute convergence of all products and series in this note.  
\begin{align*}
%   \label{defQFacFinite}
    (z; q)_n & = (1 - z)(1 - zq) \cdots (1 - zq^{n-1}),  \\
%   \label{defQFacInfinite}
    (z; q)_\infty & = \lim_{n \to \infty} (z; q)_n,  \\
%   \label{defQFacInfiniteArray}
    (z_1, z_2, \ldots, z_r; q)_\infty & = (z_1; q)_\infty (z_2; q)_\infty \cdots (z_r; q)_\infty.  
\end{align*}

\begin{prop}
\label{propInfProd}
  Let the parameters and enumerants be as in Definition \ref{defEnumerants}.  Then, 
  \begin{align*}
%    \label{eqInfProdPtn}
    \sum_{n \geq 0} {}_dA_{k, a}(n) q^n & = 
    \frac{ ( q^{a}, q^{(2k+2-d)-a}, q^{(2k+2-d)}; q^{(2k+2-d)})_\infty }
	{ ( q; q)_\infty }, \\
%    \label{eqInfProdOverPtn}
    \sum_{n \geq 0} {}_d\overline{A}_{k, a}(n) q^n & = 
    \frac{ (-q; q)_\infty ( q^{a}, q^{(2k+1-d)-a}, q^{(2k+1-d)}; q^{(2k+1-d)})_\infty }
	{ ( q; q)_\infty }.  
  \end{align*}
\end{prop}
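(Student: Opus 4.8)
The plan is to recognize each enumerant ${}_dA_{k,a}(n)$ and ${}_d\overline{A}_{k,a}(n)$ as counting partitions (or overpartitions) whose parts are restricted to certain residue classes, and then to read off the product side directly from the classical product formula for such counting functions. Concretely, for ${}_dA_{k,a}$ set $M = 2k+2-d$; the definition says we use parts that are $\not\equiv 0, \pm a \pmod M$. The ordinary generating function for partitions into parts lying in a prescribed set $S$ of positive integers is $\prod_{j \in S}(1-q^j)^{-1}$, so here it equals $\prod_{j \geq 1, \, j \not\equiv 0, \pm a \,(M)} (1-q^j)^{-1}$.

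The key step is then the standard product manipulation: starting from $(q;q)_\infty^{-1} = \prod_{j\geq 1}(1-q^j)^{-1}$, we remove the forbidden classes by multiplying by the numerator factors. Grouping the positive integers by residue mod $M$, the parts $\equiv 0 \pmod M$ contribute $(q^M;q^M)_\infty$ in the numerator; the parts $\equiv a \pmod M$ contribute $(q^a;q^M)_\infty$; and the parts $\equiv -a \equiv M-a \pmod M$ contribute $(q^{M-a};q^M)_\infty$. Hence
\begin{equation*}
  \sum_{n\geq 0} {}_dA_{k,a}(n)\, q^n
  = \prod_{\substack{j\geq 1 \\ j \not\equiv 0,\pm a \,(M)}} \frac{1}{1-q^j}
  = \frac{(q^a, q^{M-a}, q^M; q^M)_\infty}{(q;q)_\infty},
\end{equation*}
with $M = 2k+2-d$, which is the first identity. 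For the overpartition count ${}_d\overline{A}_{k,a}$, set $N = 2k+1-d$. An overpartition is a partition together with a choice of overlining at most one (the first) copy of each part; the generating function for overpartitions with non-overlined parts restricted to a set $S$ but overlined parts unrestricted (overlined parts ranging over all positive integers, each used at most once) is $(-q;q)_\infty \prod_{j\in S}(1-q^j)^{-1}$ — the factor $(-q;q)_\infty = \prod_{j\geq 1}(1+q^j)$ accounting for the optional overlined part in each size class, and the factor $\prod_{j\in S}(1-q^j)^{-1}$ for the non-overlined multiplicities. Taking $S = \{ j \geq 1 : j \not\equiv 0,\pm a \,(N)\}$ and repeating the grouping-by-residue argument on the $(q;q)_\infty^{-1}$ factor gives
\begin{equation*}
  \sum_{n\geq 0} {}_d\overline{A}_{k,a}(n)\, q^n
  = (-q;q)_\infty \cdot \frac{(q^a, q^{N-a}, q^N; q^N)_\infty}{(q;q)_\infty},
\end{equation*}
with $N = 2k+1-d$, which is the second identity.

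The one point needing care — and the main obstacle — is the degenerate case $2a = M$ (respectively $2a = N$), where $a \equiv -a \pmod{M}$ (resp.\ mod $N$), so the residue classes $+a$ and $-a$ coincide and the three ``forbidden'' classes $0, \pm a$ are really only two. In the regular-partition case the definition of ${}_dA_{k,a}(n)$ explicitly excludes $2a = 2k+2-d$, so no degeneracy arises and the computation above is unconditional. In the overpartition case, when $2a = N = 2k+1-d$ the definition instead declares ${}_d\overline{A}_{k,a}(n)$ to be the number of overpartitions with all parts $\not\equiv 0 \pmod{k+(1-d)/2}$; here $k + (1-d)/2 = N/2 = a$, and since $N$ is odd exactly when $d$ is even, this case only occurs for suitable parity of $d$ so that $(1-d)/2 \in \tfrac12\mathbb{Z}$ matches $N/2$. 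One then checks that $\{ j\geq 1 : j \not\equiv 0, a \,(N)\}$, having removed only the two genuinely distinct forbidden classes, yields $(q^a, q^{N-a}, q^N; q^N)_\infty / (q;q)_\infty$ as before — but note $q^a = q^{N/2}$ and $q^{N-a} = q^{N/2}$ are equal, so this product naturally rewrites in terms of modulus $N/2 = a$, consistent with the stated formula. I would either incorporate this degenerate case into the same display by the convention that $(q^a, q^{N-a}, q^N; q^N)_\infty$ is interpreted with the coincidence in mind, or handle it in a one-line remark; in any event the stated product formulas hold verbatim once $M$ and $N$ are substituted.
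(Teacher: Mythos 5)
Your treatment of the generic cases is fine and is exactly what the paper dismisses as evident: group the parts by residue modulo $M=2k+2-d$ (resp.\ $N=2k+1-d$), cancel the forbidden classes, and prepend $(-q;q)_\infty$ for the overlined parts. The genuine gap is in the degenerate case $2a=N$, which is in fact the only case the paper's proof addresses. There you claim that removing the ``two genuinely distinct forbidden classes'' $0$ and $a=N/2 \pmod N$ from the non-overlined parts yields $(q^{a},q^{N-a},q^{N};q^{N})_\infty/(q;q)_\infty$ ``as before.'' That is false: removing those two classes gives
\begin{equation*}
  \frac{(-q;q)_\infty\,(q^{N/2};q^{N})_\infty\,(q^{N};q^{N})_\infty}{(q;q)_\infty},
\end{equation*}
whereas the stated product contains $(q^{N/2};q^{N})_\infty$ \emph{twice}, since $q^{a}=q^{N-a}=q^{N/2}$; the two expressions differ by a full factor of $(q^{N/2};q^{N})_\infty$, so no ``convention'' or reinterpretation of the triple product can reconcile them. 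You have also overlooked that in this case Definition \ref{defEnumerants} changes the counting function: it requires \emph{all} parts, overlined as well as non-overlined, to avoid multiples of $\kappa=k+(1-d)/2=N/2$, whereas your generating function keeps the overlined parts unrestricted.

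These two discrepancies cancel each other, and exhibiting that cancellation is the actual content of the paper's proof. Writing $\kappa=a=N/2$, one has
\begin{equation*}
  \frac{(-q;q)_\infty\,(q^{\kappa},q^{\kappa},q^{2\kappa};q^{2\kappa})_\infty}{(q;q)_\infty}
  = \frac{(-q;q)_\infty\,(q^{\kappa};q^{\kappa})_\infty\,(q^{\kappa};q^{2\kappa})_\infty}{(q;q)_\infty}
  = \frac{(-q;q)_\infty}{(-q^{\kappa};q^{\kappa})_\infty}\cdot\frac{(q^{\kappa};q^{\kappa})_\infty}{(q;q)_\infty},
\end{equation*}
where the last step is Euler's identity $(q^{\kappa};q^{2\kappa})_\infty=1/(-q^{\kappa};q^{\kappa})_\infty$. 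The factor $(-q;q)_\infty/(-q^{\kappa};q^{\kappa})_\infty$ generates overlined parts not divisible by $\kappa$ and $(q^{\kappa};q^{\kappa})_\infty/(q;q)_\infty$ generates non-overlined parts not divisible by $\kappa$, which is precisely the count ${}_d\overline{A}_{k,a}(n)$ prescribed when $2a=2k+1-d$. So your proof needs this Euler-identity step; as written, the degenerate case does not go through.
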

\begin{proof}
  The only case entitled to justification is $2a = 2k+1-d$ in the second identity, 
  the others being evident.  
  In that case, let $\kappa = a = k + (1 - d)/2$ so that the infinite product 
  on the right hand side becomes
  \begin{align*}
    & \frac{ (-q; q)_\infty ( q^{\kappa}, q^{\kappa}, q^{2 \kappa}; q^{2 \kappa} )_\infty }
	{ (q; q)_\infty }
    = \frac{(-q; q)_\infty  ( q^{\kappa}; q^{\kappa} )_\infty ( q^{\kappa}; q^{2 \kappa} )_\infty }
	    { (q; q)_\infty } 
    = \frac{(-q; q)_\infty  ( q^{\kappa}; q^{\kappa} )_\infty }
	    { ( - q^{\kappa}; q^{\kappa} )_\infty (q; q)_\infty } \\
    = & \sum_{n \geq 0} {}_d\overline{A}_{k, k + (1 - d)/2}(n) \; q^n.  
  \end{align*}
  We used Euler's identity in the penultimate equation \cite[equation (1.2.5)]{Andrews-bluebook}.  
\end{proof}

\section{Main Results}
\label{secMain}

\begin{lemma}
\label{lemmaRecurrences}
  Let the parameters and enumerants be as in Definiton \ref{defEnumerants}.  Then, 
  \begin{align}
  \label{eqRecRegular}
    {}_db^s_{k, a}(m,n) 
    = & {}_db^{s+1}_{k, a-1}(m,n) + {}_db^0_{k, k-a+1-s}(m-a+1,n-m), \\
  \label{eqRecOver}
    {}_d\overline{b}^s_{k, a}(m,n) 
    = & {}_d\overline{b}^{s+1}_{k, a-1}(m,n) + {}_d\overline{b}^0_{k, k-a+1-s}(m-a+1,n-m) \\
  \nonumber
	& + {}_d\overline{b}^0_{k, k-a-s}(m-a,n-m),  \\
  \label{eqInitNoPart}
    {}_db^s_{k, a}(0,n) = & {}_d\overline{b}^s_{k, a}(0,n)
    = \begin{cases}
       1, & \textrm{ if } n = 0 \\
       0, & \textrm{ if } n > 0
      \end{cases}, \\
  \label{eqInitaEqZero}
    {}_db^s_{k, 0}(m,n) = & {}_d\overline{b}^s_{k, 0}(m,n) = 0.  
  \end{align}
  In the right-hand-sides of equations \eqref{eqRecRegular} and \eqref{eqRecOver}, 
  the superscript $s$ is understood as a residue class modulo $d$, 
  so when $s = d-1$, $s+1 = 0$.  
\end{lemma}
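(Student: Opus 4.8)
The plan is to read \eqref{eqInitNoPart} and \eqref{eqInitaEqZero} off Definition \ref{defEnumerants}, and to prove the recurrences \eqref{eqRecRegular} and \eqref{eqRecOver} by sorting the (over)partitions counted on the left according to the multiplicity of the part $1$ --- and, in the overpartition case, according to whether $\overline 1$ occurs --- and exhibiting a weight-preserving bijection for each block. Equation \eqref{eqInitNoPart} is immediate: the unique (over)partition with $m = 0$ is the empty one, it has $n = 0$, and every condition in Definition \ref{defEnumerants} holds vacuously because $k - \delta \geq 1$ for each $\delta \leq d - 1 \leq k - 1$. Equation \eqref{eqInitaEqZero} is equally immediate since $f_1 < a$ cannot hold when $a = 0$; I will also read ${}_db^s_{k,a} = {}_d\overline b^s_{k,a} = 0$ for $a \leq 0$, and part of the verification below is that in those boundary ranges the relevant class of (over)partitions really is empty, so the convention is consistent with the recurrences.

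For \eqref{eqRecRegular}: a partition $\lambda$ counted by ${}_db^s_{k,a}(m,n)$ has $f_1 \leq a-1$, so I split on $f_1 \leq a - 2$ versus $f_1 = a - 1$. If $f_1 \leq a - 2$ then $\lambda$ is also counted by ${}_db^{s+1}_{k,a-1}(m,n)$, and conversely --- the inequalities $f_i + f_{i+1} < k$ are untouched, $f_1 < a - 1$ holds, and every $\delta$-condition depends on $(a,s)$ only through $a + s - 1 \bmod d = (a-1)+(s+1)-1$ --- which accounts for the first summand (it is the zero map when $a = 1$). If $f_1 = a - 1$ (forcing $a \geq 1$), delete the $a - 1$ ones and subtract $1$ from each remaining part to get $\mu$, with frequencies $g_i = f_{i+1}$; then $\mu$ has $m - a + 1$ parts and weight $n - (a-1) - (m-a+1) = n - m$. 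The inequalities $g_j + g_{j+1} < k$ ($j \geq 1$) are just $f_{j+1} + f_{j+2} < k$, while the pair $(1,2)$ of $\lambda$ collapses to the single bound $g_1 < k - a + 1 - s$: from $f_1 + f_2 < k$ and, when $f_1 + f_2 = k - \delta$ with $1 \leq \delta \leq d - 1$, the $\delta$-condition at $i = 1$ (where $f_{\mathrm{odd}} = f_1 = a - 1$) reads $s \equiv 0, 1, \ldots, \delta - 1 \pmod d$, i.e.\ $s < \delta = k - a + 1 - g_1$. For $j \geq 2$ the shift reverses the parity of part-indices, so on each block of combined frequency $k - \delta$ one has $g_{\mathrm{odd}} = (k - \delta) - f_{\mathrm{odd}}$, and the $\delta$-condition $a + s - 1 - f_{\mathrm{odd}} \equiv 0, \ldots, \delta - 1 \pmod d$ becomes the statement that $g_{\mathrm{odd}}$ lies in the length-$\delta$ residue window ending at $k - a - s$, which is exactly the $\delta$-condition for the parameters $(s', a') = (0,\, k - a + 1 - s)$. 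Hence $\mu$ is counted by ${}_db^0_{k,\, k - a + 1 - s}(m - a + 1, n - m)$; shifting back up and prepending $a - 1$ ones inverts the map and transforms the constraints back, giving the second summand.

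For \eqref{eqRecOver}: run the same argument with the overpartitions counted by ${}_d\overline b^s_{k,a}(m,n)$ sorted by $(f_1, f_{\overline 1})$ into $\{f_1 \leq a - 2\}$, $\{f_1 = a-1,\ f_{\overline 1} = 0\}$, $\{f_1 = a-1,\ f_{\overline 1} = 1\}$, which produce the three summands in order. The first block is as before, since $\rho$ is unaffected and the $\delta$-conditions still see $(a,s)$ only through $a + s - 1 \bmod d$. In the other two, delete the $a - 1 + f_{\overline 1}$ copies of $1$ and subtract $1$ from the rest to get $\overline\mu$, with $g_i = f_{i+1}$, $g_{\overline i} = f_{\overline{i+1}}$, having $m - a + 1 - f_{\overline 1}$ parts and weight $n - m$. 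Writing $\rho, \rho'$ for the statistic of Definition \ref{defRhoStat} on $\lambda, \overline\mu$, one computes $\rho'(i) = \sum_{j = 1}^i (-1)^j f_{\overline{j+1}} = -\big(\rho(i+1) + f_{\overline 1}\big)$, so $\rho$ is negated up to the correction $-f_{\overline 1}$. Combining this with the same parity reversal of $f_{\mathrm{odd}}$ and the collapse of the pair $(1,2)$-condition (at $i = 1$ now $f_{\mathrm{odd}} = f_1 + f_{\overline 1}$ and $\rho(1) = -f_{\overline 1}$, so again $s < \delta$, hence $g_1 < k - a + 1 - s - f_{\overline 1}$), the block condition $a + s - 1 - f_{\mathrm{odd}} - \rho(\cdot) \equiv 0, \ldots, \delta - 1 \pmod d$ becomes the $\delta$-condition for $(s', a') = (0,\, k - a + 1 - s - f_{\overline 1})$. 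With $f_{\overline 1} = 0$ this makes $\overline\mu$ counted by ${}_d\overline b^0_{k,\, k - a + 1 - s}(m - a + 1, n - m)$, and with $f_{\overline 1} = 1$ by ${}_d\overline b^0_{k,\, k - a - s}(m - a, n - m)$; each map inverts as above, proving \eqref{eqRecOver}.

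The step I expect to be the main obstacle is exactly this modular bookkeeping inside the two shift maps: one must check carefully that reversing the parity of part-indices (which replaces $f_{\mathrm{odd}}$ by the complementary frequency on every block of combined multiplicity $k - \delta$) together with negating $\rho$ (with the extra $-f_{\overline 1}$) carries the residue window $\{0, 1, \ldots, \delta - 1\} \bmod d$ from the definition of ${}_dB^s_{k,a}$ or ${}_d\overline B^s_{k,a}$ exactly onto the window for the updated parameters, with $s$ reset to $0$ and $a$ replaced by $k - a + 1 - s$ or $k - a - s$. A secondary point, still to be dispatched, is that when one of these updated first-occurrence bounds is $\leq 0$ the corresponding block is genuinely empty --- from the pair $(1,2)$-condition this forces $d$ to be large enough for that condition to be active for every admissible value of $f_2$ --- so that the boundary convention is consistent with the recurrence.
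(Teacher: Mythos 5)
Your proposal is correct and follows essentially the same route as the paper: the same split according to $f_1$ (and $f_{\overline{1}}$), the same delete-the-ones-and-subtract-one shift, the same pair-$(1,2)$ bound $f_2 < k-a+1-s-f_{\overline{1}}$, and the same mod-$d$ window reversal via $f_{\mathrm{odd}} + \widetilde{f}_{\mathrm{odd}} = k-\delta$ together with the negation of $\rho$; the only organizational difference is that the paper proves \eqref{eqRecOver} first and obtains \eqref{eqRecRegular} as the overline-free special case, whereas you run the two arguments in parallel. (Incidentally, your relation $\widetilde{\rho}(i) = -\rho(i+1) - f_{\overline{1}}$ is the correct one --- the paper's statement $\rho(i+1) = 1 - \widetilde{\rho}(i)$ in the $f_{\overline{1}}=1$ case is a sign slip that does not affect the conclusion --- and the boundary issue you flag, when $k-a+1-s \leq 0$, is likewise left implicit in the paper.)
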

\begin{proof}
  Equation \eqref{eqInitNoPart} says that the only partition or overpartition with no parts
  is the empty partition or empty overpartition of zero.  
  Equation \eqref{eqInitaEqZero} is true because there are no partitions or overpartitions 
  where the number of occurrences of (non-overlined) 1 is strictly less than zero.  
  A part cannot occur a negative number of times.  
  
  We will prove equation \eqref{eqRecOver}, 
  and then argue that it implies equation \eqref{eqRecRegular}.  
  
  Let   
  \begin{align*}
   {}_d\overline{\mathcal{B}}^s_{k, a}(m, n) 
   = & \textrm{ the collection of overpartitions enumerated by } {}_d\overline{b}^s_{k, a}(m, n), \\
   \mathcal{U} 
   = & \textrm{overpartitions in } {}_d\overline{\mathcal{B}}^s_{k, a}(m, n) 
    \textrm{ for which } f_1 < a-1, \\
   \mathcal{V} 
   = & \textrm{overpartitions in } {}_d\overline{\mathcal{B}}^s_{k, a}(m, n) 
    \textrm{ for which } f_1 = a-1 \textrm{ and } f_{\overline{1}} = 0, \\
   \mathcal{W} 
   = & \textrm{overpartitions in } {}_d\overline{\mathcal{B}}^s_{k, a}(m, n) 
    \textrm{ for which } f_1 = a-1 \textrm{ and } f_{\overline{1}} = 1. \\
  \end{align*}
  First, ${}_d\overline{\mathcal{B}}^s_{k, a}(m, n)$ is the disjoint union 
  of $\mathcal{U}$, $\mathcal{V}$, and $\mathcal{W}$, 
  because the conditions $f_1 < a$ and $f_{\overline{1}} = $ 0 or 1  
  that are satisfied by all overpartitions in ${}_d\overline{\mathcal{B}}^s_{k, a}(m, n)$ 
  can be divided into the mutually exclusive and complementary cases 
  stipulated by $\mathcal{U}$, $\mathcal{V}$, and $\mathcal{W}$.  
  
  Next, an overpartition $\eta \in {}_d\overline{\mathcal{B}}^s_{k, a}(m, n)$ 
  has $f_1 < a-1$ if and only if $\eta \in {}_d\overline{\mathcal{B}}^{s+1}_{k, a-1}(m, n)$.  
  In the condition involving $f_i + f_{\overline{i}} + f_{i+1} = k - \delta$, 
  observe that the quantity $a + s \equiv (a - 1) + (s+1) \pmod{d}$ remains invariant.  
  So, $\mathcal{U}$ is in one-to-one correspondence with ${}_d\overline{\mathcal{B}}^{s+1}_{k, a-1}(m, n)$.  
  
  When an $\eta \in {}_d\overline{\mathcal{B}}^s_{k, a}(m, n)$ 
  has $f_1 = a-1$, and $f_{\overline{1}} = 0$, 
  we can produce $\widetilde{\eta}$ by deleting the 1's in $\eta$, 
  and subtracting 1 from all other parts.  
  Then $\widetilde{\eta}$ is an overpartition of $n-m$ with $m-a+1$ parts, 
  since deleting 1's also correspond to subtracting 1 from them.  
  
  In $\widetilde{\eta}$, call the frequencies of $i$ and $\overline{i}$ 
  $\widetilde{f_i}$ and $\widetilde{f_{\overline{i}}}$, respectively, 
  so $\widetilde{f}_i = f_{i+1}$ and $\widetilde{f}_{\overline{i}} = f_{\overline{i+1}}$.  
  $\widetilde{\eta}$ also satisfies $\widetilde{f}_i + \widetilde{f}_{\overline{i}} + \widetilde{f}_{i+1} < k$.  
  Given that if $f_1 + f_{\overline{1}} + f_2 = k - \delta$ then 
  $ a + s - 1 - f_1 - f_{\overline{1}} - \rho(1) \equiv 0, 1, \ldots, \delta - 1 \pmod{d}$; 
  since $f_1 = a-1$, $f_{\overline{1}} = 0$, $\rho(1) = 0$ in $\eta$, 
  $ s \equiv 0, 1, \ldots, \delta - 1 \pmod{d}$.  
  Because $s$ is fixed, $\delta$ can take values $s+1$, $s+2$, \ldots, $d-1$.  
  In other words, $f_1 + f_2 = a-1 + f_2$ is allowed to take the values 
  $k-s-1$, $k-s-2$, \ldots, $k-d-1$; 
  so $f_2 = \widetilde{f}_1 < k-a+1-s$.  
  This is why the latter lower index of the second summand on the right hand side of 
  equation \eqref{eqRecOver} is $k-a+1-s$.  
  
  Finally, $f_{i+1} + f_{\overline{i+1}} + f_{i+2} = k - \delta$ in $\eta$ whenever
  $\widetilde{f}_i + \widetilde{f}_{\overline{i}} + \widetilde{f}_{i+1} = k-\delta$ in $\widetilde{\eta}$.  
  Moreover, $f_{odd} + \widetilde{f}_{odd} = k-\delta$.  
  Because $f_{\overline{1}} = 0$ in $\eta$, $\widetilde{\rho}(i) = -\rho(i+1)$.  
  $\widetilde{\rho}$ gives the $\rho-$statistic in $\widetilde{\eta}$.  
  Subtracting 1 from parts switched parity, and thus switched the sign of the $\rho-$statistic.  
  We are now down to showing
  \begin{equation}
  \label{eqEta}
    a + s - 1 - f_{odd} - \rho(i+1) \equiv 0, 1, \ldots, \delta - 1 \pmod{d}
  \end{equation}
  if and only if 
  \begin{equation}
  \label{eqEtaTilde}
    (k-a+1-s) + 0 - 1 - \widetilde{f}_{odd} - \widetilde{\rho}(i) \equiv 0, 1, \ldots, \delta - 1 \pmod{d}
  \end{equation}
  for fixed $\delta$.  Making the substitutions per the above paragraph, 
  congruence \eqref{eqEtaTilde} is equivalent to 
  \begin{equation*}
%   \label{eqEtaTildeChanged}
    - a - s  + \delta + f_{odd} + \rho(i+1) \equiv 0, 1, \ldots, \delta - 1 \pmod{d}, 
  \end{equation*}
  which in turn is equivalent to congruence \eqref{eqEta} after multiplying by $-1$ 
  and adding $\delta-1$ to both sides.  
  Consequently, $\widetilde{\eta} \in {}_d\overline{\mathcal{B}}^0_{k, k-a+s-1}(m-a+1, m-n)$.  
  
  Conversely, each such $\widetilde{\eta} \in {}_d\overline{\mathcal{B}}^0_{k, k-a+s-1}(m-a+1, m-n)$ 
  yields an $\eta \in \mathcal{V}$ by adding 1 to each part and appending $(a-1)$ extra 1's.  
  This shows that there is a one-to-one correspondence 
  between ${}_d\overline{\mathcal{B}}^0_{k, k-a+s-1}(m-a+1, m-n)$ and $\mathcal{V}$.  
  
  This procedure shows a one-to-one correspondence between 
  ${}_d\overline{\mathcal{B}}^0_{k, k-a+s}(m-a, m-n)$ and $\mathcal{W}$ as well.  
  In this case, $\eta$ is losing $a$ parts, $(a-1)$ 1's and a $\overline{1}$;   
  and $\rho(i+1) = 1 - \widetilde{\rho}(i)$.  
  This establishes equation \eqref{eqRecOver}.  
  
  To see equation \eqref{eqRecRegular}, simply observe that 
  a (regular) partition is an overpartition with no overlined parts, 
  hence $\mathcal{W}$ is empty, and $\rho(i) = 0$ for all $i$ in $\eta$.  
\end{proof}

Making the reasonable assumption that ${}_db^{s}_{k, a}(m, n)$ and similar partition counters 
are zero when $n$ or $m$ is negative, 
one sees that the equations \eqref{eqRecRegular} - \eqref{eqInitaEqZero}
uniquely determine ${}_db^s_{k, a}(m,n)$'s and ${}_d\overline{b}^s_{k, a}(m,n)$'s.  
We then have the following corollary.  

\begin{cor}
\label{corFuncEq}
  Let the parameters and enumerants be as in Definition \ref{defEnumerants}.  
  Set 
  \begin{align*}
%   \label{eqGenFuncReg}
    {}_dF^{s}_{k, a}(x; q) = & \sum_{m, n \geq 0}
      {}_db^{s}_{k, a}(m, n) x^m q^n, \\
%   \label{eqGenFuncOver}
    {}_d\overline{F}^{s}_{k, a}(x; q) = & \sum_{m, n \geq 0}
      {}_d\overline{b}^{s}_{k, a}(m, n) x^m q^n.  
  \end{align*}
  Then, 
  \begin{align*}
%   \label{eqFuncEqReg}
    {}_dF^{s}_{k, a}(x; q)  - {}_dF^{s+1}_{k, a-1}(x; q) 
    = & (xq)^{a-1} {}_dF^{0}_{k, k-a+1-s}(xq; q), \\
%   \label{eqFuncEqOver}
    {}_d\overline{F}^{s}_{k, a}(x; q)  - {}_d\overline{F}^{s+1}_{k, a-1}(x; q) 
    = & (xq)^{a-1} {}_d\overline{F}^{0}_{k, k-a+1-s}(xq; q) 
      + (xq)^{a} {}_d\overline{F}^{0}_{k, k-a-s}(xq; q), \\
%   \label{eqGenFuncInitEmpty}
    {}_dF^{s}_{k, a}(0; q) 
    = & {}_d\overline{F}^{s}_{k, a}(0; q) = 1, \\
%   \label{eqGenFuncInitaEqZero}
    {}_dF^{s}_{k, 0}(x; q) 
    = & {}_d\overline{F}^{s}_{k, 0}(x; q) = 0. 
  \end{align*}
  The listed functional equations and initial conditions uniquely determine 
  ${}_dF^{s}_{k, a}(x; q)$'s and ${}_d\overline{F}^{s}_{k, a}(x; q)$'s. 
\end{cor}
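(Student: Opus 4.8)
\emph{The plan is} to turn the recurrences of Lemma \ref{lemmaRecurrences} directly into the claimed functional equations by multiplying through by $x^m q^n$ and summing over all $m, n \geq 0$, using the convention noted just above the corollary that a counter is $0$ whenever an argument is negative. With that convention in force the sums run unrestrictedly over $m, n \geq 0$, and each of \eqref{eqRecRegular}, \eqref{eqRecOver}, \eqref{eqInitNoPart}, \eqref{eqInitaEqZero} becomes an identity of formal power series in $x$ over $\mathbb{C}[[q]]$. The first two summands on the right of \eqref{eqRecOver} reproduce $\sum_{m,n}{}_d\overline{b}^{s+1}_{k,a-1}(m,n)x^mq^n = {}_d\overline{F}^{s+1}_{k,a-1}(x;q)$ with no change of indices, and similarly for \eqref{eqRecRegular}; the only genuine computation is in the shifted terms.

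\emph{For the shifted terms} I would reindex. In $\sum_{m,n\geq 0}{}_d\overline{b}^{0}_{k,k-a+1-s}(m-a+1,\,n-m)\,x^mq^n$ put $m' = m-(a-1)$ and $n' = n-m$; the vanishing convention makes the surviving terms precisely those with $m'\geq 0$, $n'\geq 0$, so nothing is lost or double counted, and the weight becomes $x^{m'+a-1}q^{n'+m'+a-1} = (xq)^{a-1}(xq)^{m'}q^{n'}$. Summing over $m', n'\geq 0$ yields $(xq)^{a-1}\,{}_d\overline{F}^{0}_{k,k-a+1-s}(xq;q)$. The identical computation with $m' = m-a$, $n' = n-m$ converts the third summand of \eqref{eqRecOver} into $(xq)^{a}\,{}_d\overline{F}^{0}_{k,k-a-s}(xq;q)$, and suppressing that summand (the set $\mathcal{W}$ being empty for ordinary partitions) gives the regular case from \eqref{eqRecRegular}. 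The two initial conditions are read off at once: ${}_dF^{s}_{k,a}(0;q) = \sum_n {}_db^{s}_{k,a}(0,n)q^n = 1$ by \eqref{eqInitNoPart}, with the same for the barred series, and ${}_dF^{s}_{k,0}(x;q) = {}_d\overline{F}^{s}_{k,0}(x;q) = 0$ by \eqref{eqInitaEqZero}.

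\emph{For uniqueness}, the quickest argument is to appeal to the remark preceding the statement: equations \eqref{eqRecRegular}--\eqref{eqInitaEqZero} already pin down the counters ${}_db^{s}_{k,a}(m,n)$ and ${}_d\overline{b}^{s}_{k,a}(m,n)$, hence the power series whose coefficients they are. If one prefers a self-contained argument at the level of series, take the difference $H^{s}_{k,a}$ of two solutions of the functional-equation system in $\mathbb{C}[[q]][[x]]$; it satisfies the homogeneous functional equations with $H^{s}_{k,a}(0;q) = 0$ and $H^{s}_{k,0} = 0$. Comparing coefficients of $x^m$ gives $[x^m]H^{s}_{k,a} = [x^m]H^{s+1}_{k,a-1} + q^m[x^{m-a+1}]H^{0}_{k,k-a+1-s}$, and I would induct on $m$. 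The case $m = 0$ is the initial condition; for $m \geq 1$ the shifted term vanishes by the induction hypothesis whenever $a \geq 2$, so $[x^m]H^{s}_{k,a} = [x^m]H^{s+1}_{k,a-1}$, and running this down to $a = 1$ and then applying the $a=1$ instance of the recurrence gives $[x^m]H^{s}_{k,a} = q^m[x^m]H^{0}_{k,k-s-a+1}$; substituting this relation into itself via the index involution $a \leftrightarrow k-a+1$ produces a factor $q^{2m}$, and since $1-q^{2m}$ is a unit in $\mathbb{C}[[q]]$ for $m \geq 1$ we conclude $[x^m]H^{s}_{k,a} = 0$ for all $s, a$. Hence $H \equiv 0$.

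\emph{I expect the only obstacle} to be bookkeeping rather than anything conceptual: one must check that the reindexing of the shifted terms loses no boundary contribution, so that the negative-argument convention makes the resummation an exact identity, and, in the uniqueness step, one must notice that the coefficient recurrence is not literally decreasing in $m$ when $a = 1$, which is precisely why the $q$-degree argument producing the $q^{2m}$ factor is needed to close the induction.
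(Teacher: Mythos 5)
Your reindexing derivation of the functional equations and your first uniqueness argument (appeal to the coefficient-level uniqueness) are exactly the paper's implicit proof: the corollary is stated without a separate proof, being the generating-function transcription of Lemma \ref{lemmaRecurrences} combined with the remark preceding it that the recurrences \eqref{eqRecRegular}--\eqref{eqInitaEqZero}, with the negative-argument convention, pin down the coefficients. Your optional self-contained series-level uniqueness induction is also essentially sound, with one bookkeeping caveat: since the superscript is a residue class modulo $d$, running the lower index down to $1$ gives $[x^m]H^{s}_{k,a} = q^m [x^m] H^{0}_{k,\,k-((s+a-1)\bmod d)}$ rather than index $k-s-a+1$; the induced map on the residue, $r \mapsto (k-r-1)\bmod d$, is still an involution, so iterating twice does produce the factor $q^{2m}$ and your argument closes as intended (and the overpartition case works the same way, the extra term having $x$-degree at most $m-1$).
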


\begin{lemma}
\label{lemmaSeries}
  Let the parameters be as in Definition \ref{defEnumerants}.  Set
  \begin{align*}
%   \label{eqSeriesReg}
    {}_dG^{s}_{k, a}(x; q) = \sum_{n \geq 0} 
	  {}_d\alpha[s]_n(x; q) q^{-na} + {}_d\beta[s]_n(x; q) (xq^{n+1})^{a},  \\
%   \label{eqSeriesOver}
    {}_d\overline{G}^{s}_{k, a}(x; q) = \sum_{n \geq 0} 
	  {}_d\overline{\alpha}[s]_n(x; q) q^{-na} + {}_d\overline{\beta}[s]_n(x; q) (xq^{n+1})^{a}, 
  \end{align*}
  where
  \begin{align*}
%   \label{eqAlphaReg}
    {}_d\alpha[s]_n(x; q) 
    = & \; \frac{ ((xq)^d; q^d)_\infty }{ (xq; q)_\infty } \; (-1)^n \; 
      \frac{ x^{(k+1-d)n} \; q^{(2k+2-d)n(n+1)/2} }{ (q^d; q^d)_n \; ( (xq^{n+1})^d; q^d )_\infty } \\
%   \nonumber    
    & \times (q^{-n})^s \left( q^{dn} \; (xq)^{d-s} \frac{(1 - (xq)^s)}{(1 - (xq)^d)} 
      + \frac{(1 - (xq)^{d-s})}{(1 - (xq)^d)} \right),  \\
%   \label{eqBetaReg}
    {}_d\beta[s]_n(x; q) 
    = & \; - \; \frac{ ((xq)^d; q^d)_\infty }{ (xq; q)_\infty } \; (-1)^n \; 
      \frac{ x^{(k+1-d)n} \; q^{(2k+2-d)n(n+1)/2} }{ (q^d; q^d)_n \; ( (xq^{n+1})^d; q^d )_\infty } \\
%   \nonumber    
    & \times (q^{-n})^{d-s} \left( \frac{(1 - (xq)^s)}{(1 - (xq)^d)} 
      + q^{dn} \; (xq)^{s} \; \frac{(1 - (xq)^{d-s})}{(1 - (xq)^d)} \right), 
  \end{align*}
  \begin{align*}
%   \label{eqAlphaOver}
    {}_d\overline{\alpha}[s]_n(x; q) 
    = & \; \frac{ ((xq)^d; q^d)_\infty }{ (xq; q)_\infty } \; (-1)^n \; 
      \frac{ x^{(k+1-d)n} \; q^{(2k+1-d)n(n+1)/2} }{ (q^d; q^d)_n \; ( (xq^{n+1})^d; q^d )_\infty } 
      \; (-q; q)_n \; (-xq^{n+1}; q)_\infty \\
%   \nonumber    
    & \times (q^{-n})^s \left( q^{dn} \; (xq)^{d-s} \frac{(1 - (xq)^s)}{(1 - (xq)^d)} 
      + \frac{(1 - (xq)^{d-s})}{(1 - (xq)^d)} \right), \\
%   \label{eqBetaOver}
    {}_d\overline{\beta}[s]_n(x; q) 
    = & \; - \; \frac{ ((xq)^d; q^d)_\infty }{ (xq; q)_\infty } \; (-1)^n \; 
      \frac{ x^{(k+1-d)n} \; q^{(2k+1-d)n(n+1)/2} }{ (q^d; q^d)_n \; ( (xq^{n+1})^d; q^d )_\infty } 
      \; (-q; q)_n \; (-xq^{n+1}; q)_\infty \\
%   \nonumber    
    & \times (q^{-n})^{d-s} \left( \frac{(1 - (xq)^s)}{(1 - (xq)^d)} 
      + q^{dn} \; (xq)^{s} \; \frac{(1 - (xq)^{d-s})}{(1 - (xq)^d)} \right).  
  \end{align*}
  Then, 
  \begin{align}
  \label{eqSeriesFuncEqReg}
    {}_dG^{s}_{k, a}(x; q)  - {}_dG^{s+1}_{k, a-1}(x; q) 
    = & (xq)^{a-1} {}_dG^{0}_{k, k-a+1-s}(xq; q), \\
%   \label{eqSeriesFuncEqOver}
  \nonumber
    {}_d\overline{G}^{s}_{k, a}(x; q)  - {}_d\overline{G}^{s+1}_{k, a-1}(x; q) 
    = & (xq)^{a-1} {}_d\overline{G}^{0}_{k, k-a+1-s}(xq; q) 
      + (xq)^{a} {}_d\overline{G}^{0}_{k, k-a-s}(xq; q), \\
%   \label{eqSeriesInitEmpty}
  \nonumber
    {}_dG^{s}_{k, a}(0; q) 
    = & {}_d\overline{G}^{s}_{k, a}(0; q) = 1, \\
  \label{eqSeriesInitaEqZero}
%   \nonumber
    {}_dG^{s}_{k, 0}(x; q) 
    = & {}_d\overline{G}^{s}_{k, 0}(x; q) = 0 
    \quad \textrm{ ( if } 2s \equiv 0 \pmod{d} \textrm{ )}.  
  \end{align}
  Here again, $s$ in the superscripts is understood as a residue class modulo $d$, 
  so when $s = d-1$, $s+1 = 0$ in the superscripts.  
\end{lemma}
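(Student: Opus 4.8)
\emph{Proof plan.} The idea is to verify by direct substitution that the explicit series ${}_dG^{s}_{k,a}$ and ${}_d\overline{G}^{s}_{k,a}$ satisfy the four displayed functional equations and initial conditions; together with Corollary \ref{corFuncEq} this also identifies them with ${}_dF^{s}_{k,a}$ and ${}_d\overline{F}^{s}_{k,a}$, though that identification plays no role in the present lemma. First I would record the structure of the coefficients. Let
\[
  H_n = \frac{((xq)^d;q^d)_\infty}{(xq;q)_\infty}\,\frac{(-1)^n\,x^{(k+1-d)n}\,q^{(2k+2-d)n(n+1)/2}}{(q^d;q^d)_n\,((xq^{n+1})^d;q^d)_\infty}
\]
be the factor common to ${}_d\alpha[s]_n$ and ${}_d\beta[s]_n$, let $\overline{H}_n$ be the same expression with $q^{(2k+1-d)n(n+1)/2}$ in place of $q^{(2k+2-d)n(n+1)/2}$ and multiplied by $(-q;q)_n(-xq^{n+1};q)_\infty$, and write $P_s=\dfrac{1-(xq)^s}{1-(xq)^d}$, $Q_s=\dfrac{1-(xq)^{d-s}}{1-(xq)^d}$, so that ${}_d\alpha[s]_n=H_n(q^{-n})^s\big(q^{dn}(xq)^{d-s}P_s+Q_s\big)$ and ${}_d\beta[s]_n=-H_n(q^{-n})^{d-s}\big(P_s+q^{dn}(xq)^sQ_s\big)$, and likewise with bars. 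The trivial identities $(xq)^{d-s}P_s+Q_s=1$ and $(xq)^sQ_s+P_s=1$ then give ${}_d\alpha[0]_n=H_n$, ${}_d\beta[0]_n=-H_n$ (and their barred analogues), as well as ${}_d\alpha[s]_0=H_0$, ${}_d\beta[s]_0=-H_0$ for all $s$; these facts are used constantly.

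The initial conditions are quick. Since $k\ge d$, every term with $n\ge1$ carries a positive power of $x$, every $(xq^{n+1})^a$ with $a\ge1$ vanishes at $x=0$, and the $n=0$ $\alpha$-term equals $H_0=(xq;q)_\infty^{-1}$, which is $1$ at $x=0$; hence ${}_dG^{s}_{k,a}(0;q)={}_d\overline{G}^{s}_{k,a}(0;q)=1$. When $a=0$ we have ${}_dG^{s}_{k,0}(x;q)=\sum_n\big({}_d\alpha[s]_n+{}_d\beta[s]_n\big)$, and collecting powers of $q^n$ shows that ${}_d\alpha[s]_n+{}_d\beta[s]_n$ vanishes identically precisely when $s=0$ or $s=d/2$, i.e. when $2s\equiv0\pmod d$; the same computation handles the barred series.

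The functional equations are the heart of the proof. Substituting the series into \eqref{eqSeriesFuncEqReg} and using $(q^{-n})^{a-1}=q^n(q^{-n})^a$, $(xq^{n+1})^{a-1}=(xq^{n+1})^{-1}(xq^{n+1})^a$, and, on the right, $(xq)^{a-1}(q^{-n})^{k-a+1-s}=(xq)^{-1}q^{-n(k+1-s)}(xq^{n+1})^a$ and $(xq)^{a-1}(xq^{n+2})^{k-a+1-s}=(xq)^{-1}(xq^{n+2})^{k+1-s}(q^{-(n+1)})^a$, one groups everything into terms carrying $(xq^{n+1})^a$ and terms carrying $q^{-na}$; after reindexing $n\mapsto n-1$ in the $q^{-na}$-terms produced by the $\beta$-part of the right side, the identity holds term by term in $n$ as soon as
\begin{align*}
  {}_d\alpha[s]_n-q^n\,{}_d\alpha[s+1]_n &= (xq)^{-1}(xq^{n+1})^{k+1-s}\,{}_d\beta[0]_{n-1}(xq;q)\qquad(n\ge1),\\
  {}_d\beta[s]_n-(xq^{n+1})^{-1}\,{}_d\beta[s+1]_n &= (xq)^{-1}q^{-n(k+1-s)}\,{}_d\alpha[0]_n(xq;q)\qquad(n\ge0),
\end{align*}
the $n=0$ case of the first being vacuous because ${}_d\alpha[s]_0={}_d\alpha[s+1]_0=H_0$. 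Each of these two identities I would check by substituting the formulas: the left sides collapse, via $(xq)P_s-P_{s+1}=-\dfrac{1-xq}{1-(xq)^d}$ and $Q_s-Q_{s+1}=\dfrac{(xq)^{d-s-1}(1-xq)}{1-(xq)^d}$, to $H_n$ times a simple closed form, while the right sides reduce, via the two Pochhammer shift relations for $H_{n-1}(xq;q)/H_n(x;q)$ and $H_n(xq;q)/H_n(x;q)$ (each a short product of factors $1-xq$, $1-(xq)^d$, $1-q^{dn}$, $1-(xq^{n+1})^d$ and a power of $q$), to the same thing. The overpartition case runs in exact parallel: the right-hand side of the functional equation now has the extra summand $(xq)^a\,{}_d\overline{G}^{0}_{k,k-a-s}(xq;q)$, whose $(xq^{n+1})^a$- and $q^{-na}$-contributions supply precisely the factors $1+xq^{n+1}$ and $1+q^n$ needed to absorb the extra $(-q;q)_n$ and $(-xq^{n+1};q)_\infty$ in $\overline{H}_n$ (the shift relation $\overline{H}_{n-1}(xq;q)/\overline{H}_n(x;q)$ acquiring an extra $1+q^n$ and $\overline{H}_n(xq;q)/\overline{H}_n(x;q)$ an extra $(1+xq^{n+1})^{-1}$).

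I expect the only genuine difficulty to be bookkeeping: tracking the $n\mapsto n-1$ reindexing, handling the three separate contributions on the right in the overpartition case, and being careful in the Pochhammer shift relations — in particular that $(-xq^{n+1};q)_\infty\mapsto(-xq^{n+2};q)_\infty$ removes a factor, so contributes $(1+xq^{n+1})^{-1}$ rather than $1+xq^{n+1}$. Once the two displayed identities and their barred analogues are in place, the functional equations follow at once, and with the initial conditions already established the lemma is proved.
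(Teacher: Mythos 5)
Your proposal is correct and is essentially the paper's own proof: the paper likewise reduces the functional equations to term-by-term recurrences for ${}_d\alpha[s]_n$, ${}_d\beta[s]_n$ and their barred analogues (your two displayed identities are exactly the paper's displayed recurrences divided by $(q^{-n})^a$, resp.\ $(xq^{n+1})^a$), and it settles the initial conditions the same way, via ${}_d\alpha[s]_0(0;q)=1$ and ${}_d\alpha[s]_n=-{}_d\beta[s]_n$ holding precisely when $2s\equiv 0\pmod{d}$. Two cosmetic remarks only: for $s=d-1$ your generic computation still applies because formally setting $s+1=d$ in the coefficient formulas reproduces the $s=0$ coefficients (this is why the paper lists that case separately), and in your final parenthetical the ratio $\overline{H}_{n-1}(xq;q)/\overline{H}_n(x;q)$ acquires the factor $(1+q^n)^{-1}$ — the combined two right-hand summands supply the compensating $1+q^n$ — rather than an extra $1+q^n$.
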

\begin{proof}
  One verifies the following functional equations: 
  \begin{align*}
    {}_d\alpha[s]_n(x; q) \; (q^{-n})^{a} - {}_d\alpha[s+1]_n(x; q) \; (q^{-n})^{a-1}
    = & \; (xq)^{a-1} \; {}_d\beta[0]_{n-1}(xq; q) \; (xq^{n+1})^{k-a+1-s}, \\
    {}_d\overline{\alpha}[s]_n(x; q) \; (q^{-n})^{a} - {}_d\overline{\alpha}[s+1]_n(x; q) \; (q^{-n})^{a-1} 
    = & \; (xq)^{a-1} \; {}_d\overline{\beta}[0]_{n-1}(xq; q) \; (xq^{n+1})^{k-a+1-s} \\
      & + \; (xq)^{a} \; {}_d\overline{\beta}[0]_{n-1}(xq; q) \; (xq^{n+1})^{k-a-s}, \\
    {}_d\beta[s]_n(x; q) \; (xq^{n+1})^{a} - {}_d\beta[s+1]_n(x; q) \; (xq^{n+1})^{a-1}
    = & \; (xq)^{a-1} \; {}_d\alpha[0]_{n}(xq; q) \; (q^{-n})^{k-a+1-s}, \\
    {}_d\overline{\beta}[s]_n(x; q) \; (xq^{n+1})^{a} - {}_d\overline{\beta}[s+1]_n(x; q) \; (xq^{n+1})^{a-1} 
    = & \; (xq)^{a-1} \; {}_d\overline{\alpha}[0]_{n}(xq; q) \; (q^{-n})^{k-a+1-s} \\
      & + \; (xq)^{a} \; {}_d\overline{\alpha}[0]_{n}(xq; q) \; (q^{-n})^{k-a-s}
  \end{align*}
  for $s = 0, \ldots, d-2$, and
  \begin{align*}
    {}_d\alpha[d-1]_n(x; q) \; (q^{-n})^{a} - {}_d\alpha[0]_n(x; q) \; (q^{-n})^{a-1}
    = & \; (xq)^{a-1} \; {}_d\beta[0]_{n-1}(xq; q) \; (xq^{n+1})^{k-a+2-d}, \\
    {}_d\overline{\alpha}[d-1]_n(x; q) \; (q^{-n})^{a} - {}_d\overline{\alpha}[0]_n(x; q) \; (q^{-n})^{a-1} 
    = & \; (xq)^{a-1} \; {}_d\overline{\beta}[0]_{n-1}(xq; q) \; (xq^{n+1})^{k-a+2-d} \\
      & + \; (xq)^{a} \; {}_d\overline{\beta}[0]_{n-1}(xq; q) \; (xq^{n+1})^{k-a+1-d}, \\ 
    {}_d\beta[d-1]_n(x; q) \; (xq^{n+1})^{a} - {}_d\beta[0]_n(x; q) \; (xq^{n+1})^{a-1}
    = & \; (xq)^{a-1} \; {}_d\alpha[0]_{n}(xq; q) \; (q^{-n})^{k-a+2-d}, \\
    {}_d\overline{\beta}[d-1]_n(x; q) \; (xq^{n+1})^{a} - {}_d\overline{\beta}[0]_n(x; q) \; (xq^{n+1})^{a-1} 
    = & \; (xq)^{a-1} \; {}_d\overline{\alpha}[0]_{n}(xq; q) \; (q^{-n})^{k-a+2-d} \\
      & + \; (xq)^{a} \; {}_d\overline{\alpha}[0]_{n}(xq; q) \; (xq^{-n})^{k-a+1-d}.  
  \end{align*}
  These are straightforward calculations.  
  The two groups of recurrences are separated for emphasis on how $s$ is interpreted.  
  Then one verifies the following:  
  \begin{align*}
    {}_d\alpha[s]_0(0; q) = & {}_d\overline{\alpha}[s]_0(0; q) = 1, \\
    {}_d\alpha[s]_n(x; q) = & - {}_d\beta[s]_n(x; q), \\
    {}_d\overline{\alpha}[s]_n(x; q) = & - {}_d\overline{\beta}[s]_n(x; q).  
  \end{align*}
  The last two identities hold only when $s = 0$ or $2s = d$, hence
  the condition $2s \equiv 0 \pmod {d}$.  
\end{proof}

\begin{cor}
\label{corSeriesGenFunc}
  Let ${}_dF^{s}_{k, a}(x; q)$, ${}_d\overline{F}^{s}_{k, a}(x; q)$, 
  ${}_dG^{s}_{k, a}(x; q)$, and ${}_d\overline{G}^{s}_{k, a}(x; q)$ 
  be given as in Corollary \ref{corFuncEq} and Lemma \ref{lemmaSeries}.  
  Then, 
  \begin{equation*}
%   \label{eqGenFuncEqSeriesReg} 
    {}_dF^{s}_{k, a}(x; q) = {}_dG^{s}_{k, a}(x; q)
  \end{equation*}
  when $2(a+s) \equiv 2 (k+1) \equiv 0 \pmod{d}$, and 
  \begin{equation*}
%   \label{eqGenFuncEqSeriesOver} 
    {}_d\overline{F}^{s}_{k, a}(x; q) = {}_d\overline{G}^{s}_{k, a}(x; q)  
  \end{equation*}
  when $d = 1$ or $d = 2$.  
\end{cor}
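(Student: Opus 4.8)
The strategy is to reduce everything to the uniqueness assertion at the end of Corollary~\ref{corFuncEq}. That corollary says the listed functional equations and initial conditions have a unique solution; Lemma~\ref{lemmaSeries} says the series ${}_dG^{s}_{k,a}$ and ${}_d\overline{G}^{s}_{k,a}$ satisfy the \emph{same} functional equations, the same normalization ${}_dG^{s}_{k,a}(0;q)={}_d\overline{G}^{s}_{k,a}(0;q)=1$, and the vanishing ${}_dG^{s}_{k,0}(x;q)={}_d\overline{G}^{s}_{k,0}(x;q)=0$ --- but the last of these only when $2s\equiv 0\pmod d$. So the proof comes down to verifying that, starting from an index triple $(k,a,s)$ obeying the congruences in the statement, the recursions never refer to a triple outside the region where all the needed relations (in particular the vanishing at $a=0$) are available. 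Once that is known, one sets ${}_dH^{s}_{k,a}={}_dF^{s}_{k,a}-{}_dG^{s}_{k,a}$, observes that $H$ solves the homogeneous system on that region with zero boundary data, and concludes $H\equiv 0$ by the same well-founded induction --- descent in $a$, and descent in total degree in $x,q$ for the $x\mapsto xq$ summand --- that proves uniqueness in Corollary~\ref{corFuncEq}.

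Concretely, I would list the index substitutions that occur in the recursions and track their effect on $2(a+s)$ modulo $d$ (the residue of $2(k+1)$ modulo $d$ being plainly invariant, since $k$ and $d$ are untouched, and $s$ being taken modulo $d$). The substitution $(a,s)\mapsto(a-1,s+1)$ fixes $a+s$; the substitution $(a,s)\mapsto(k-a+1-s,0)$ replaces $2(a+s)$ by $2(k+1)-2(a+s)$; and, in the overpartition recursion only, the extra summand produces $(a,s)\mapsto(k-a-s,0)$, which replaces $2(a+s)$ by $2(k+1)-2(a+s)-2$. Under the hypothesis $2(a+s)\equiv 2(k+1)\equiv 0\pmod d$ the first two maps preserve the congruences, so the family of admissible regular triples is closed; iterating the regular recursion drives $a$ down to $0$ at an index where $2s\equiv 0\pmod d$, which is exactly where \eqref{eqSeriesInitaEqZero} applies, so every terminal term of ${}_dH$ vanishes and ${}_dF^{s}_{k,a}={}_dG^{s}_{k,a}$ follows. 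For overpartitions the third map preserves the congruences only when $2\equiv 0\pmod d$, i.e.\ $d=1$ or $d=2$; this is precisely the source of the restriction to small $d$, and for $d\le 2$ the congruences on $a$, $s$, $k$ hold automatically, which is why they do not appear in the overpartition statement.

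The step I expect to need the most care is the bookkeeping at the boundary of the index range --- when one of $k-a+1-s$, $k-a-s$, or the running value of $a$ reaches $0$ or drops below it. At $0$ the relevant series vanish by \eqref{eqSeriesInitaEqZero} (in the regime $2s\equiv 0\pmod d$ secured in the previous paragraph) and by the matching initial condition for the $F$'s; below $0$ one invokes the convention that all of these partition counters, and the closed-form series assigned to them, are $0$, mirroring the combinatorial fact that no partition has $f_1$ negative. With that convention in place the closure argument above runs verbatim, and the well-founded induction of Corollary~\ref{corFuncEq} delivers ${}_dF^{s}_{k,a}={}_dG^{s}_{k,a}$ and ${}_d\overline{F}^{s}_{k,a}={}_d\overline{G}^{s}_{k,a}$ in the stated ranges.
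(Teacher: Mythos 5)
Your argument is exactly the paper's: the paper proves this corollary by observing that the $F$'s and $G$'s satisfy the same functional equations and initial conditions, the congruences $2(a+s)\equiv 0$, $2(k-a+1-s)\equiv 0$ (and, for overpartitions, additionally $2(k-a-s)\equiv 0$, which forces $d=1$ or $d=2$) being required precisely because the vanishing condition \eqref{eqSeriesInitaEqZero} is only available when $2s\equiv 0\pmod{d}$, and then invoking the uniqueness asserted in Corollary \ref{corFuncEq}. Your tracking of the index maps $(a,s)\mapsto(a-1,s+1)$, $(a,s)\mapsto(k-a+1-s,0)$, $(a,s)\mapsto(k-a-s,0)$ merely makes explicit the closure argument the paper leaves implicit, so the two proofs coincide.
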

\begin{proof}
  Since ${}_dF^{s}_{k, a}(x; q)$ and ${}_dG^{s}_{k, a}(x; q)$ satisfy 
  the same functional equations and same initial conditions 
  when
  \begin{equation*}
   2(a+s) \equiv 0 \pmod{d} \textrm{, and } 2(k-a+1-s + 0) \equiv 0 \pmod{d}
  \end{equation*}
  the first identity follows.  
  The latter congruence is necessary because 
  the series on the right hand side of equation \eqref{eqSeriesFuncEqReg} 
  is subject to the same initial conditions.  
  
  Similar considerations for ${}_d\overline{G}^{s}_{k, a}(x; q)$
  bring 
  \begin{equation*}
   2(a+s) \equiv 0 \pmod{d}, \; 
   2(k-a+1-s + 0) \equiv 0 \pmod{d} \textrm{, and }
   2(k-a-s + 0) \equiv 0 \pmod{d}.  
  \end{equation*}
  The last two congruences imply $2 \equiv 0 \pmod{d}$, 
  forcing $d = 1$ or $d = 2$.  
\end{proof}

\begin{prop}
\label{propJTP}
  Let the parameters be as in Definition \ref{defEnumerants}, 
  and ${}_dG^{s}_{k, a}(x; q)$ and ${}_d\overline{G}^{s}_{k, a}(x; q)$ 
  be defined as in Lemma \ref{lemmaSeries}.  Then,
  \begin{align*}
%   \label{eqSeriesProductReg}
    {}_dG^{s}_{k, a}(1; q) = & \; 
      \frac{ (q^{d-s} - q^{d}) }{ (1 - q^d) } \; 
      \frac{ ( q^{a+s-d}, q^{(2k+2-d)-(a+s-d)}, q^{(2k+2-d)}; q^{(2k+2-d)})_\infty }{ (q; q)_\infty } \\
%   \nonumber 
    & + \frac{ (1 - q^{d-s}) }{ (1 - q^d) } \; 
      \frac{ ( q^{a+s}, q^{(2k+2-d)-(a+s)}, q^{(2k+2-d)}; q^{(2k+2-d)})_\infty }{ (q; q)_\infty } \\
%   \nonumber
    = & \; 
      \frac{ (q^{a+s} - q^{a}) }{ (1 - q^d) } \; 
      \frac{ ( q^{d-a-s}, q^{(2k+2-d)-(d-a-s)}, q^{(2k+2-d)}; q^{(2k+2-d)})_\infty }{ (q; q)_\infty } \\
%   \nonumber 
    & + \frac{ (1 - q^{d-s}) }{ (1 - q^d) } \; 
      \frac{ ( q^{a+s}, q^{(2k+2-d)-(a+s)}, q^{(2k+2-d)}; q^{(2k+2-d)})_\infty }{ (q; q)_\infty }, \\
%   \label{eqSeriesProductOver}
    {}_d\overline{G}^{s}_{k, a}(1; q) = & \; 
      \frac{ (q^{d-s} - q^{d}) }{ (1 - q^d) } \; 
      \frac{ ( -q; q)_\infty \; ( q^{a+s-d}, q^{(2k+1-d)-(a+s-d)}, q^{(2k+1-d)}; q^{(2k+1-d)})_\infty }
	  { (q; q)_\infty } \\
%   \nonumber 
    & + \frac{ (1 - q^{d-s}) }{ (1 - q^d) } \; 
      \frac{ ( -q; q)_\infty \; ( q^{a+s}, q^{(2k+1-d)-(a+s)}, q^{(2k+1-d)}; q^{(2k+1-d)})_\infty }
	  { (q; q)_\infty }.  
  \end{align*}
\end{prop}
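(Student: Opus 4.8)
The plan is to compute ${}_dG^{s}_{k, a}(1; q)$ and ${}_d\overline{G}^{s}_{k, a}(1; q)$ directly from the series in Lemma \ref{lemmaSeries}, collapse them to bilateral theta sums, and apply the Jacobi triple product. First I would substitute $x = 1$. The essential simplification is the telescoping
\[
  \left. \frac{ ((xq)^d; q^d)_\infty }{ (q^d; q^d)_n \, ( (xq^{n+1})^d; q^d )_\infty } \right|_{x = 1} = 1,
\]
together with $\left. x^{(k+1-d)n} \right|_{x=1} = 1$ and $\left. (xq^{n+1})^a \right|_{x=1} = q^{(n+1)a}$; in the overpartition case one also uses $(-q; q)_n \, (-q^{n+1}; q)_\infty = (-q; q)_\infty$, which only contributes a constant factor. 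Writing $N = 2k+2-d$ in the regular case and $N = 2k+1-d$ in the overpartition case, these substitutions reduce ${}_d\alpha[s]_n(1; q)$ and ${}_d\beta[s]_n(1; q)$ (and their barred versions) to $\tfrac{1}{(q;q)_\infty}$ (respectively $\tfrac{(-q;q)_\infty}{(q;q)_\infty}$) times $(-1)^n q^{Nn(n+1)/2}$ times an explicit rational function of $q$ and $q^n$.

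Next I would merge the $\alpha$- and $\beta$-parts into a single bilateral sum. Applying the reindexing $n \mapsto -n-1$ to the $\beta$-sum over $n \geq 0$ turns it into a sum over $n \leq -1$; one checks that $n(n+1)$ is preserved, that $(-1)^n q^{(n+1)a}$ becomes $-(-1)^n q^{-na}$, and that the remaining linear $q$-powers rearrange so that each transformed $\beta$-term coincides with the corresponding $\alpha$-term. This yields
\[
  {}_dG^{s}_{k, a}(1; q) = \frac{1}{(q;q)_\infty} \sum_{n \in \mathbb{Z}} (-1)^n q^{Nn(n+1)/2 - na} \cdot \frac{ q^{n(d-s)+d-s}(1-q^s) + q^{-ns}(1-q^{d-s}) }{ 1 - q^d },
\]
and similarly for ${}_d\overline{G}^{s}_{k, a}(1; q)$ with the extra factor $(-q;q)_\infty$ and $N = 2k+1-d$. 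Splitting the sum into the part weighted by $\tfrac{q^{d-s}(1-q^s)}{1-q^d}$ and the part weighted by $\tfrac{1-q^{d-s}}{1-q^d}$ produces two sums of the form $\sum_{n \in \mathbb{Z}} (-1)^n q^{Nn(n+1)/2 - bn}$, with $b = a+s-d$ and $b = a+s$ respectively.

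Then I would invoke the Jacobi triple product in the form $\sum_{n \in \mathbb{Z}} (-1)^n q^{Nn(n+1)/2 - bn} = (q^b, q^{N-b}, q^N; q^N)_\infty$. Substituting $q^{d-s}(1-q^s) = q^{d-s} - q^d$ gives the first displayed form of ${}_dG^{s}_{k, a}(1; q)$, and the same computation with $N = 2k+1-d$ and the $(-q;q)_\infty$ factor gives the stated form of ${}_d\overline{G}^{s}_{k, a}(1; q)$. For the second displayed form of ${}_dG^{s}_{k, a}(1; q)$ I would apply the reflection $(q^{d-a-s}, q^{N-(d-a-s)}, q^N; q^N)_\infty = -q^{d-a-s} (q^{a+s-d}, q^{N-(a+s-d)}, q^N; q^N)_\infty$ — which follows by peeling off one factor of each product, or equivalently from the same Jacobi triple product after the change of variable $n \mapsto -n-1$ — to the first term only, and use $-q^{a+s-d}(q^{d-s}-q^d) = q^{a+s}-q^a$.

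The step I expect to be the main obstacle is the term-by-term matching in the second paragraph: one must track the quadratic exponent $Nn(n+1)/2$ against the linear contributions from $q^{-na}$, $q^{(n+1)a}$, $q^{dn+d-s-ns}$, $q^{-n(d-s)}$ and $q^{dn+s}$ carefully enough to verify that the reindexed $\beta$-terms really do agree with the $\alpha$-terms; this is routine but is where a sign or exponent slip would most easily creep in. All rearrangements are legitimate for $|q| < 1$, since the quadratic exponents $q^{Nn(n+1)/2}$ force absolute convergence of every sum involved.
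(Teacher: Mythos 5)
Your proposal is correct and follows essentially the same route as the paper: set $x=1$, use the telescoping of the $q^d$-products, recombine the $\alpha$- and $\beta$-sums into bilateral theta sums (the paper does this by splitting into four unilateral sums and reindexing $n\mapsto -n$ and $n\mapsto n-1$, which is equivalent to your single substitution $n\mapsto -n-1$ on the $\beta$-part), and finish with Jacobi's triple product. Your reflection step $(q^{d-a-s},q^{N-(d-a-s)},q^N;q^N)_\infty=-q^{d-a-s}(q^{a+s-d},q^{N-(a+s-d)},q^N;q^N)_\infty$ for the second displayed form is the same observation the paper encodes in the identity $(q^{d-s}-q^d)(1-q^{a+s-d})=(q^{a+s}-q^a)(1-q^{d-a-s})$, and the exponent bookkeeping you flag as delicate does check out.
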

\begin{proof}
  We will demonstrate the former string of identities only, 
  the latter is completely analogous.  
  \begin{align*}
    {}_dG^{s}_{k, a}(1; q) = & \sum_{n \geq 0} 
	  {}_d\alpha[s]_n(1; q) q^{-na} + {}_d\beta[s]_n(1; q) (q^{n+1})^{a}
  \end{align*}
  \begin{align*}
    = \frac{ 1 }{ (1 - q^d) \; ( q; q)_\infty } \; 
    & \sum_{n \geq 0} (-1)^n q^{(2k+2-d)n(n+1)/2} \; q^{-an} 
      \; \left[ q^{-sn} \left( q^{dn+d-s} (1 - q^s) + (1 - q^{d-s}) \right) \right] \\
    - & (-1)^n q^{(2k+2-d)n(n+1)/2} \; q^{a(n+1)} 
      \; \left[ q^{-n(d-s)} \left( (1 - q^s) + q^{dn+s} (1 - q^{d-s}) \right) \right]
  \end{align*}
  \begin{align*}
    = \frac{ 1 }{ (1 - q^d) \; ( q; q)_\infty } \; 
    & \sum_{n \geq 0} (-1)^n q^{(2k+2-d)n(n+1)/2} \; q^{-an} 
      \; \left[ q^{-n(s-d)} (q^{d-s} - q^d) + q^{-ns} (1 - q^{d-s}) \right] \\
    - & (-1)^n q^{(2k+2-d)n(n+1)/2} \; q^{a(n+1)} 
      \; \left[ q^{(n+1)(s-d)} (q^{d-s} - q^d) + q^{(n+1)s} (1 - q^{d-s}) \right]
  \end{align*}
  \begin{align*}
    = \frac{ 1 }{ (1 - q^d) \; ( q; q)_\infty } \; 
     \bigg\{ & (q^{d-s} - q^d) \sum_{n \geq 0} (-1)^n q^{(2k+2-d)n(n+1)/2} \; q^{-n(a+s-d)} \\
     + & (q^{d-s} - q^d) \sum_{n \geq 0} (-1)^n q^{(2k+2-d)n(n+1)/2} \; q^{(n+1)(a+s-d)} \\
     + & (1 - q^{d-s}) \sum_{n \geq 0} (-1)^n q^{(2k+2-d)n(n+1)/2} \; q^{-n(a+s)} \\
     + & (1 - q^{d-s}) \sum_{n \geq 0} (-1)^n q^{(2k+2-d)n(n+1)/2} \;  q^{(n+1)(a+s)} \bigg\}
  \end{align*}
  Now substitute $n \leftarrow -n$ in the first and third sums, 
  $n \leftarrow n-1$ in the second and the fourth.  
  \begin{align*}
    = \frac{ 1 }{ (1 - q^d) \; ( q; q)_\infty } \; 
     \bigg\{ & (q^{d-s} - q^d) \sum_{n \leq 0} (-1)^n q^{(2k+2-d)n(n-1)/2} \; q^{n(a+s-d)} \\
     + & (q^{d-s} - q^d) \sum_{n \geq 1} (-1)^n q^{(2k+2-d)n(n-1)/2} \; q^{n(a+s-d)} \\
     + & (1 - q^{d-s}) \sum_{n \leq 0} (-1)^n q^{(2k+2-d)n(n-1)/2} \; q^{n(a+s)} \\
     + & (1 - q^{d-s}) \sum_{n \geq 1} (-1)^n q^{(2k+2-d)n(n-1)/2} \;  q^{n(a+s)} \bigg\}
  \end{align*}
  \begin{align*}
    = \frac{ 1 }{ (1 - q^d) \; ( q; q)_\infty } \; 
     \bigg\{ & (q^{d-s} - q^d) \sum_{n = -\infty}^\infty (-1)^n q^{(2k+2-d)n(n-1)/2} \; q^{n(a+s-d)} \\
     + & (1 - q^{d-s}) \sum_{n = -\infty}^\infty (-1)^n q^{(2k+2-d)n(n-1)/2} \;  q^{n(a+s)} \bigg\}
  \end{align*}
  Finally use Jacobi's triple product identity \cite[equation (1.6.1)]{GR} 
  to obtain the first identity in the former string of equations.  
  
  For the second identity there, 
  one just needs to notice that 
  \begin{equation*}
    (q^{d-s} - q^d) (1 - q^{a+s-d}) = (q^{a+s} - q^a)(1 - q^{d-(s+a)})
  \end{equation*}
  implies
  \begin{align*}
    & \frac{(q^{d-s} - q^d)}{(1 - q^d)} \; 
    ( q^{(a+s-d)}, q^{(2k+2-d)-(a+s-d )} ; q^{(2k+2-d)})_\infty \\
    & = \frac{(q^{s+a} - q^a)}{(1 - q^d)} \; 
    ( q^{(d-a-s)}, q^{(2k+2-d)-(d-a-s)} ; q^{(2k+2-d)})_\infty.  
  \end{align*}
\end{proof}

We had better recall part of Definition \ref{defEnumerants} in the main result.  
\begin{theorem}
\label{thMain}
  Let $n$, $m$, $k$, $a$, $d$, $s$ be non-negative integers such that 
  \[
    k \geq 2, \quad 
    1 \leq a \leq k, \quad 
    1 \leq d \leq k, \quad 
    0 \leq s \leq d-1.  
  \]
  
  \begin{tabular}{rcp{13cm}}
    ${}_dA_{k, a}(n)$ & = & 
      the number of partitions of $n$
      using parts $\not\equiv 0, \pm a \pmod{2k+2-d}$, 
      unless $2a = 2k + 2 - d$. 
  \end{tabular}
  
  \begin{tabular}{rcp{13cm}}
    ${}_d\overline{A}_{k, a}(n)$ & = & 
       number of overpartitions of $n$
       where non-overlined parts $\not\equiv 0, \pm a$ $\pmod{2k+1-d}$, 
	if $2a \neq 2k+1-d$, 
	
       number of overpartitions of $n$
       where all parts $\not\equiv 0 \pmod{k + (1-d)/2}$, 
	if $2a = 2k+1-d$.  
  \end{tabular}
  
  \begin{tabular}{rcp{13cm}}
    ${}_dB^s_{k, a}(n)$ & = & 
      number of partitions of $n$ such that 
      $\quad f_i + f_{i+1} < k$, 
      $\quad f_1 < a$, 
      
      and for $\delta = 1, 2, \ldots, d-1$, 
      if $f_i + f_{i+1} = k-\delta$, 
      
      then $a + s - 1 - f_{\textrm{odd}} \equiv 0, 1, \ldots, \delta-1 \pmod{d}$, 
      
      where $f_{\textrm{odd}} = f_i$ if $i$ is odd, 
      and $f_{\textrm{odd}} = f_{i+1}$ if $i$ is even.  
  \end{tabular}
  
  \begin{tabular}{rcp{13cm}}
    ${}_d\overline{B}^s_{k, a}(n)$ & = & 
      number of overpartitions of $n$ such that 
      $\quad f_i + f_{\overline{i}} + f_{i+1} < k$, 
      $\quad f_1 < a$, 
      
      and for $\delta = 1, 2, \ldots, d-1$, 
      if $f_i + f_{\overline{i}} + f_{i+1} = k-\delta$, 
      
      then $a + s - 1 - f_{\textrm{odd}}  - \rho(i) \equiv 0, 1, \ldots, \delta-1 \pmod{d}$, 
      
      where $f_{\textrm{odd}} = f_i + f_{\overline{i}}$ if $i$ is odd, 
      and $f_{\textrm{odd}} = f_{i+1}$ if $i$ is even.  
  \end{tabular}
  
  Then, in case $s = 0$, 
  \begin{equation}
  \label{eqMainResultNiceReg}
    {}_dA_{k, a}(n) = {}_dB^0_{k, a}(n)
  \end{equation}
  when $2a \equiv 2(k+1) \equiv 0 \pmod{d}$, 
  
  \begin{equation*}
%   \label{eqMainResultNiceOver}
    {}_d\overline{A}_{k, a}(n) = {}_d\overline{B}^0_{k, a}(n)
  \end{equation*}
  when $d = 1$ or $d= 2$, 
  
  and in case $s \neq 0$, 
  \begin{align}
  \label{eqMainResultUglyRegI}
    & {}_dA_{k, a+s}(n) - {}_dA_{k, a+s}(n - (d-s)) 
    + {}_dA_{k, a+s-d}(n-(d-s)) - {}_dA_{k, a+s-d}(n-d) \\
    \nonumber
    & = {}_dB^s_{k, a}(n) - {}_dB^s_{k, a}(n-d)
  \end{align}
  when $2(a+s) \equiv 2(k+1) \equiv 0 \pmod{d}$, $2(a+s) \neq 2k + 2 + d$, 
  and $d < a + s$, 
  
  \begin{align*}
%   \label{eqMainResultUglyRegII}
    {}_dA_{k, a+s}(n) - {}_dA_{k, a+s}(n - (d-s))
    = {}_dB^s_{k, a}(n) - {}_dB^s_{k, a}(n-d)
  \end{align*}
  when $2(a+s) \equiv 2(k+1) \equiv 0 \pmod{d}$, $2(a+s) \neq 2k + 2 + d$, 
  and $d = a + s$, 
  
  \begin{align*}
%   \label{eqMainResultUglyRegIII}
    & {}_dA_{k, a+s}(n) - {}_dA_{k, a+s}(n - (d-s)) 
    + {}_dA_{k, d-a-s}(n-(a+s)) - {}_dA_{k, d-a-s}(n-a) \\
%     \nonumber
    & = {}_dB^s_{k, a}(n) - {}_dB^s_{k, a}(n-d)
  \end{align*}
  when $2(a+s) \equiv 2(k+1) \equiv 0 \pmod{d}$, $2(a+s) \neq 2k + 2 + d$, 
  and $d > a + s$, 

  \begin{align*}
%   \label{eqMainResultUglyOverI}
    & {}_d\overline{A}_{k, a+s}(n) - {}_d\overline{A}_{k, a+s}(n - (d-s)) 
    + {}_d\overline{A}_{k, a+s-d}(n-(d-s)) - {}_d\overline{A}_{k, a+s-d}(n-d) \\
%     \nonumber
    & = {}_d\overline{B}^s_{k, a}(n) - {}_d\overline{B}^s_{k, a}(n-d)
  \end{align*}
  when $d = 1$ or $d = 2$, 
  and $d < a + s$, 

  \begin{align*}
%   \label{eqMainResultUglyOverII}
    {}_d\overline{A}_{k, a+s}(n) - {}_d\overline{A}_{k, a+s}(n - (d-s)) 
    = {}_d\overline{B}^s_{k, a}(n) - {}_d\overline{B}^s_{k, a}(n-d)
  \end{align*}
  when $d = 1$ or $d = 2$, 
  and $d = a + s$.  
\end{theorem}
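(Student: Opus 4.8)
The strategy is to chain together all the intermediate results that have already been assembled. The engine is Corollary~\ref{corSeriesGenFunc}, which identifies the two-variable generating functions: \({}_dF^{s}_{k,a}(x;q) = {}_dG^{s}_{k,a}(x;q)\) when \(2(a+s)\equiv 2(k+1)\equiv 0\pmod d\), and \({}_d\overline{F}^{s}_{k,a}(x;q) = {}_d\overline{G}^{s}_{k,a}(x;q)\) when \(d=1,2\). Specializing \(x=1\) kills the ``number of parts'' variable and leaves \({}_dF^{s}_{k,a}(1;q)=\sum_n {}_dB^s_{k,a}(n)q^n\) on the combinatorial side and, by Proposition~\ref{propJTP}, an explicit combination of infinite triple products (hence of \({}_dA\)-type generating functions) on the analytic side. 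So the whole theorem is really the \(x=1\) specialization of the chain \(\text{(generating function of }{}_dB)\; =\; {}_dG^{s}_{k,a}(1;q)\; =\;\text{(combination of }{}_dA\text{ generating functions)}\), and the same for the overlined objects.

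\emph{Step 1.} Observe \({}_dF^{s}_{k,a}(1;q)=\sum_{m,n}{}_db^s_{k,a}(m,n)q^n=\sum_n{}_dB^s_{k,a}(n)q^n\), using that \(\sum_m{}_db^s_{k,a}(m,n)={}_dB^s_{k,a}(n)\) from Definition~\ref{defEnumerants}; likewise for \({}_d\overline{F}\). \emph{Step 2.} Invoke Corollary~\ref{corSeriesGenFunc} at \(x=1\) under the stated congruence hypotheses to replace \({}_dF\) by \({}_dG\), resp.\ \({}_d\overline{F}\) by \({}_d\overline{G}\). \emph{Step 3.} Apply Proposition~\ref{propJTP} to expand \({}_dG^{s}_{k,a}(1;q)\) (resp.\ \({}_d\overline{G}^{s}_{k,a}(1;q)\)) as \(\frac{(1-q^{d-s})}{(1-q^d)}\) times the product with base exponent \(a+s\), plus \(\frac{q^{d-s}-q^d}{1-q^d}\) times the product with base exponent \(a+s-d\) (or, via the identity at the end of that proof, \(\frac{q^{a+s}-q^a}{1-q^d}\) times the product with exponent \(d-a-s\)). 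Recognize each triple-product quotient as \(\sum_n {}_dA_{k,\ast}(n)q^n\) by Proposition~\ref{propInfProd}, noting that the modulus there is exactly \(2k+2-d\) (regular case) or \(2k+1-d\) (overpartition case).

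\emph{Step 4 — case analysis on \(a+s\) versus \(d\).} When \(s=0\) the prefactor \(\frac{1-q^{d-s}}{1-q^d}\) equals \(1\) and \(\frac{q^{d-s}-q^d}{1-q^d}\) equals \(0\), so \({}_dG^{0}_{k,a}(1;q)=\sum_n{}_dA_{k,a}(n)q^n\) and \eqref{eqMainResultNiceReg} follows at once; similarly for \({}_d\overline{A}\) when \(d\in\{1,2\}\). When \(s\neq 0\), multiply the Proposition~\ref{propJTP} identity through by \((1-q^d)\): on the \({}_dB\)-side this gives \(\sum_n({}_dB^s_{k,a}(n)-{}_dB^s_{k,a}(n-d))q^n\), and on the \({}_dA\)-side one gets \((1-q^{d-s})\sum_n{}_dA_{k,a+s}(n)q^n\) plus the second term. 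If \(d<a+s\) the second product has positive base exponent \(a+s-d\), so it is directly \(\sum_n{}_dA_{k,a+s-d}(n)q^n\) and multiplying by \(q^{d-s}-q^d\) and reading off coefficients yields \eqref{eqMainResultUglyRegI}. If \(d=a+s\) the base exponent \(a+s-d\) is \(0\), the corresponding triple product \((q^0,q^{2k+2-d},q^{2k+2-d};q^{2k+2-d})_\infty\) vanishes, so that term disappears and one gets the two-term identity. If \(d>a+s\) one uses the alternate form with exponent \(d-a-s>0\), giving the \({}_dA_{k,d-a-s}\) version. The overpartition cases \(d\in\{1,2\}\) are handled identically, with \(d-s\in\{0,1\}\) so only the subcases \(d<a+s\) and \(d=a+s\) arise, matching the two displayed overpartition identities.

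\emph{Main obstacle.} The genuinely delicate point is bookkeeping the exclusion clauses: Proposition~\ref{propInfProd} identifies the triple-product quotient with \(\sum{}_dA_{k,a}(n)q^n\) only when one tracks the ``unless \(2a=2k+2-d\)'' (resp.\ ``if \(2a=2k+1-d\)'') proviso, and one must check the degenerate cases (base exponent \(0\), or base exponent equal to half the modulus) do not secretly arise in the ranges where the theorem claims the four-term identity — this is exactly why the hypothesis \(2(a+s)\neq 2k+2+d\) is imposed and why the \(d=a+s\) and \(d>a+s\) cases are split off. Verifying that every boundary exponent lands in the regime covered by Proposition~\ref{propInfProd}, and that the ``\(s\) as a residue modulo \(d\)'' convention in Lemma~\ref{lemmaSeries} is consistent with the integer inequalities \(1\le a\le k\), \(0\le s\le d-1\) used to index the \({}_dA\)-enumerants, is the part requiring care; everything else is coefficient extraction.
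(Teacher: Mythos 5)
Your proposal is correct and is essentially the paper's own proof: the published argument likewise specializes Corollary \ref{corSeriesGenFunc} at $x=1$, identifies ${}_dG^{s}_{k,a}(1;q)$ via Proposition \ref{propJTP} with a combination of the ${}_dA$-generating functions from Proposition \ref{propInfProd}, and reads off coefficients after clearing $(1-q^d)$, treating the remaining cases \emph{mutatis mutandis}. Your elaboration of the case split ($s=0$; $d<a+s$; $d=a+s$ via the vanishing triple product; $d>a+s$ via the alternate form in Proposition \ref{propJTP}) matches what the paper leaves implicit, so there is nothing to correct.
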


The case $d > a+s$ is vacuous for $d =$ 1 or 2, 
and $s \neq 0$.  

\begin{proof}
  Observe that equations \eqref{eqMainResultNiceReg} and \eqref{eqMainResultUglyRegI}
  are equivalent to the identity of the generating functions
  \begin{equation*}
    \frac{(q^{d-s} - q^{d})}{(1 - q^d)} \sum_{n \geq 0} {}_dA_{k, a+s-d}(n)
    + \frac{(1 - q^{d-s})}{(1 - q^d)} \sum_{n \geq 0} {}_dA_{k, a+s}(n)
    = \sum_{n \geq 0} {}_dB^s_{k, a}(n) 
  \end{equation*}
  which is a consequence of Proposition \ref{propInfProd}, 
  Corollary \ref{corSeriesGenFunc}, and Proposition \ref{propJTP} combined.  
  The same line of reasoning applies to other equations \emph{mutatis mutandis}.  
\end{proof}

% \section{Construction of Series}
% \label{secConstruction}

\section{Discussion and Further Research}
\label{secFuture}

When $d = 1$, the condition $\pmod{1}$ is vacuous, 
so Theorem \ref{thMain} yields Rogers-Ramanujan-Gordon identities \cite{RRG} (Theorem \ref{thRRG}), 
and Gordon's theorem for overpartitions \cite{Chen-etal-RRG-over, Lvj-Gordon-overptn}.  

For $d = 2$ and $s = 0$, the condition $2a \equiv 2(k+1) \equiv 0 \pmod{2}$ 
is satisfied by any $k$ or $a$. 
Also, the only possible value of $\delta$ is 1.  
So we just need to show the equivalence of 
\begin{equation*}
% \label{eqBresOverptnCondOld}
  i f_i + i f_{\overline{i}} + (i+1) f_{i+1} \equiv V(i) + a - 1 \pmod{2}
\end{equation*}
and
\begin{equation*}
% \label{eqBresOverptnCondNew}
  a - 1 - f_{\textrm{odd}} - \rho(i) \equiv 0 \pmod{2}.  
\end{equation*}
when $i f_i + i f_{\overline{i}} + (i+1) f_{i+1} = k-1$.  
This is a consequence of $1 \equiv -1 \pmod{2}$, 
and $i f_i + i f_{\overline{i}} + (i+1) f_{i+1} \equiv f_{\textrm{odd}} \pmod{2}$.  
We can switch to (regular) partitions from overpartitions 
as described in the proof of Lemma \ref{lemmaRecurrences}.  
So we recover Bressoud's all-moduli generalization of 
Rogers-Ramanujan-Gordon identities \cite{Br-RRG-AllMod} (Theorem \ref{thBressoud}), 
and that for overpartitions \cite{Chen-etal-overptn-Bressoud, Ctl-etal-overptn-RR-evenmod}
via Theorem \ref{thMain}.  

For all other combinations of $d$ and $s$, the results are new.  
The reader may find obtaining the $d = 2$, $s = 1$ cases 
from the $d = 2$, $s = 0$ cases of Theorem \ref{thMain} an amusing combinatorial exercise.  

The construction of the series in Lemma \ref{lemmaSeries} 
is a little different from the construction in \cite{K-Andrews-Stayla}.  
There, the construction progresses linearly 
from the definition to the series.  
Here in contrast, one begins with Bressoud's extension of 
Rogers-Ramanujan-Gordon identities \cite{Br-RRG-AllMod} 
or its counterpart for overpartitions \cite{Chen-etal-overptn-Bressoud, Ctl-etal-overptn-RR-evenmod},
and reasons that there must be a companion identity.  

For demonstration purposes, let's work with (regular) partitions.  
In the extension of Rogers-Ramanujan-Gordon due to Bressoud \cite{Br-RRG-AllMod}, 
the extra condition
\begin{equation*}
% \label{eqDemReg}
  f_{i} + f_{i+1} = k-1 
  \quad \Rightarrow \quad 
  i \, f_i + (i+1) f_{i+1} \equiv a - 1 \pmod{2}
\end{equation*}
on top of Gordon's condition has the obvious companion 
\begin{equation*}
% \label{eqDemRegCompanion}
  f_{i} + f_{i+1} = k-1 
  \quad \Rightarrow \quad 
  i \, f_i + (i+1) f_{i+1} \not\equiv a - 1 \pmod{2}.  
\end{equation*}
Then, one labels generating functions for 
partitions satisfying the former condition 
and partitions satisfying the latter condition.  
These are ${}_2F^0_{k, a}(x; q)$ and ${}_2F^1_{k, a}(x; q)$, respectively, 
in our notation (cf. the equivalence of conditions at the beginning of this section).  
After that, one constructs series ${}_2G^0_{k, a}(x; q)$ and ${}_2G^1_{k, a}(x; q)$ 
as instructed in \cite[sec. 4]{K-Andrews-Stayla}.  
One of the matrix equations one gets is 
\begin{equation*}
% \label{eqDemMatrik}
  \begin{bmatrix} q^{-n} & -1 \\ -1 & q^{-n} \end{bmatrix} \; 
  \begin{bmatrix} {}_2\alpha[0]_n(x; q) \\ {}_2\alpha[1]_n(x; q) \end{bmatrix}
  = (xq^{n+1})^k \; {}_2\beta[0]_{n-1}(xq; q) \;
  \begin{bmatrix} 1 \\ (xq^{n+1})^{-1} \end{bmatrix}
\end{equation*}
Those matrix equations have nice solutions that yield 
compatible series with the Jacobi's triple product identity \cite[equation (1.6.1)]{GR}, 
and one has the $d = 2$ case for (regular) partitions in Theorem \ref{thMain}.  

At this point, one has two choices.  
The first is to try and find general conditions $\pmod{d}$ which reduce to 
the conditions above when $d = 2$.  
And then, use (not automated so far) machinery of \cite{K-Andrews-Stayla} 
to construct series for partition generating functions.  
One always gets some series, but they may or may not be product identity friendly.  
This path has not been very fruitful in this setting.  

The other choice is, try and find product-identity friendly series first, 
which coincide with the known series in $d = 1$ and $d = 2$ cases 
(together with their functional equations), 
and then set those series as a departure point to produce the definitons.  
The above matrix equation may be extended to larger sizes as
\begin{equation*}
% \label{eqDemMatrikAlphaToBetaGeneral}
  \begin{bmatrix} q^{-n} & -1 & & \\ & q^{-n} & -1 & \\ 
      & & \ddots & \\ -1 & & & q^{-n} \end{bmatrix}
  \begin{bmatrix} {}_d\alpha[0]_n(x; q) \\ {}_d\alpha[1]_n(x; q) \\ 
      \vdots \\ {}_d\alpha[d-1]_n(x; q) \end{bmatrix}
  = (xq^{n+1})^k \; {}_d\beta[0]_{n-1}(xq; q) \;
  \begin{bmatrix} 1 \\ (xq^{n+1})^{-1} \\ \vdots \\ (xq^{n+1})^{-(d-1)} \end{bmatrix}
\end{equation*}
and 
\begin{equation*}
% \label{eqDemMatrikBetaToAlphaGeneral}
  \begin{bmatrix} xq^{n+1} & -1 & & \\ & xq^{n+1} & -1 & \\ 
      & & \ddots & \\ -1 & & & xq^{n+1} \end{bmatrix}
  \begin{bmatrix} {}_d\beta[0]_n(x; q) \\ {}_d\beta[1]_n(x; q) \\ 
      \vdots \\ {}_d\beta[d-1]_n(x; q) \end{bmatrix}
  = (q^{-n})^k \; {}_d\alpha[0]_{n-1}(xq; q)
  \begin{bmatrix} 1 \\ q^{n} \\ \vdots \\ q^{(d-1)n} \end{bmatrix}
\end{equation*}
These matrix equations together with the initial conditions set in Lemma \ref{lemmaSeries}
produce the product identity friendly series ${}_dG^s_{k, a}(x; q)$, 
which reduce to the known series for $d = 1$ and $d = 2$, $s = 0$.  

Then we use Corollary \ref{corSeriesGenFunc} to identify the series as partition generating functions, 
and the equivalence of Corollary \ref{corFuncEq} and Lemma \ref{lemmaRecurrences}
gives us recurrences \eqref{eqRecRegular}, \eqref{eqInitNoPart} and \eqref{eqInitaEqZero}.  
Now, one stipulates that these are induced by conditions $\pmod{d}$ that pertain to 
frequencies of two consecutive parts.  
One already has Gordon's conditions, namely $f_i + f_{i+1} < k$ and $f_1 < a$.  
Also, the recurrence \eqref{eqRecRegular} relates two partitions 
one of which is obtained from the other by deleting 1's and subtracting 1 from the remaining parts.  
It is fairly easy to get the rest of the definition of ${}_db^s_{k, a}(m, n)$, 
and hence, of ${}_dB^s_{k, a}(n)$.  

Same route with overpartitions brings the definition of the $\rho-$statistic
(Definition \ref{defRhoStat}) as well, used in the definition of ${}_d\overline{B}^s_{k, a}(n)$.  
Incidentally, the $\rho-$statistic reduces to the $V-$statistic for overpartitions 
\cite{Chen-etal-overptn-Bressoud, Ctl-etal-overptn-RR-evenmod} in cases $d = 1$ and $d = 2$, 
and one has to be content with the companion identity to Bressoud's theorem for overpartitions.  
However, the point is, although we do not have infinite product representations for $d > 2$, 
there is more to the multiplicity condition side than it meets the eye.  

The bottleneck in the construction is the initial condition \eqref{eqSeriesInitaEqZero}.  
This initial condition alone brings the extra unpleasant conditions 
$2(a+s) \equiv 2(k+1) \equiv 0 \pmod{d}$ or $d = 1$ or $d = 2$
in Theorem \ref{thMain}.  

Thus, a problem for future investigation is: 
Can one find another condition $\pmod{d}$ on the multiplicity side
for overpartitions on top of Gordon's condition 
which reduce to Bressoud type generalization for overpartitions when $d = 2$, 
and still yield congruence conditions on the right hand side for $d \geq 3$? 
In other words, can one have a nicer condition $\pmod{d}$ which brings infinite products, 
or some combination of infinite products as generating functions on the right hand side?  

Rogers-Ramanujan-Gordon theorem and their extensions go hand in hand with 
Andrews-Gordon type identities.  
In other words, some of the partition counters in Definition \ref{defEnumerants} 
have their generating functions expressed as multiple sums.  
Known examples are the Andrews-Gordon identities themselves \cite{Andrews-Gordon}, 
their counterpart for Bressoud's all-moduli generalization \cite{Br-Andrews-Gordon}, 
Andrews-Gordon identities for overpartitions \cite{Chen-etal-RRG-over}, 
and their Bressoud style generalization for overpartitions \cite{Sang-Shi-AndrewsGordon-Overptn}.  
These constitute multiple series generating functions for 
${}_1B^0_{k, a}(n)$, ${}_2B^0_{k, a}(n)$, ${}_1\overline{B}^0_{k, a}(n)$, and ${}_2\overline{B}^0_{k, a}(n)$.  

Using Gordon marking for partitions \cite{K-gordonmarking}, it is possible to have 
a multiple series generating function for ${}_2B^1_{k, a}(n)$, 
and using Gordon marking for overpartitions \cite{Chen-etal-RRG-over}, it must be possible 
to have a multiple series generating function for ${}_2\overline{B}^1_{k, a}(n)$.  
This takes care of all cases when $d = 1$ or $d = 2$.  

Another open question, therefore, 
is the possibility of construction of multiple series generating functions for
${}_dB^s_{k, a}(n)$ and ${}_d\overline{B}^s_{k, a}(n)$ for $d \geq 3$.  
Gordon marking for either (regular) partitions or overpartitions 
behaves nicely with parity, but does not seem to do so with higher moduli.  

\bibliographystyle{amsplain}

\end{document}